\numberwithin{equation}{section}
\theoremstyle{plain}
\newtheorem{theorem}{Theorem}[section]
\newtheorem{proposition}[theorem]{Proposition}
\newtheorem{lemma}[theorem]{Lemma}
\newtheorem{conjecture}[theorem]{Conjecture}
\theoremstyle{remark}
\theoremstyle{definition}
\newtheorem{definition}[theorem]{Definition}
\newtheorem*{definition*}{Definition}
\newtheorem{question}[theorem]{Question}
\newcommand{\ttt}{\mathbf{t}}
\newcommand{\LL}{\mathcal{L}}
\renewcommand{\SS}{\mathcal{S}}
\newcommand{\CC}{\mathcal{C}}
\newcommand{\WW}{\mathcal{W}}
\newcommand{\R}{\mathbb{R}}
\newcommand{\Z}{\mathbb{Z}}
\newcommand{\N}{\mathbb{N}}
\newcommand{\roo}{\varrho}
\newcommand{\dd}{\,\mathrm{d}}
\renewcommand{\ge}{\geqslant}
\renewcommand{\le}{\leqslant}
\renewcommand{\geq}{\geqslant}
\renewcommand{\leq}{\leqslant}
\DeclareMathOperator{\ldimloc}{\underline{dim}_{loc}}
\DeclareMathOperator{\udimm}{\overline{dim}_M}
\DeclareMathOperator{\dimh}{dim_H}
\DeclareMathOperator{\dist}{dist}
\DeclareMathOperator{\proj}{proj}
\DeclareMathOperator{\spt}{spt}
\DeclareMathOperator{\id}{Id}
\DeclareMathOperator{\graph}{Gr}
\begin{document}

\title{Level sets of prevalent H\"older functions}

\author{Roope Anttila}
\address[Roope Anttila]
        {Research Unit of Mathematical Sciences \\ 
         P.O.\ Box 8000 \\ 
         FI-90014 University of Oulu \\ 
         Finland}
\email{roope.anttila@oulu.fi}

\author{Bal\'azs B\'ar\'any}
\address[Bal\'azs B\'ar\'any]
        {Department of Stochastics \\
        	Institute of Mathematics \\
        	Budapest University of Technology and Economics \\
        	M\H{u}egyetem rkp. 3 \\
        	H-1111 Budapest,
        	Hungary}
\email{balubsheep@gmail.com}

\author{Antti K\"aenm\"aki}
\address[Antti K\"aenm\"aki]
        {Alfr\'ed R\'enyi Institute of Mathematics \\
         Hungarian Academy of Sciences \\ 
         Budapest \\ 
         Hungary}
\email{antti.kaenmaki@gmail.com}

\subjclass[2020]{Primary 26A16, 28A78; Secondary 26E15, 28A50}
\keywords{Hölder functions, prevalence, level sets, Hausdorff dimension}
\date{\today}
\thanks{R. Anttila was supported by the Magnus Ehrnrooth foundation. B. B\'ar\'any was supported by the grants NKFI FK134251, K142169, and the grant NKFI KKP144059 ``Fractal geometry and applications''.}

\begin{abstract}
  We study the level sets of prevalent Hölder functions. For a prevalent $\alpha$-H\"older function on the unit interval, we show that the upper Minkowski dimension of every level set is bounded from above by $1-\alpha$ and Lebesgue positively many level sets have Hausdorff dimension equal to $1-\alpha$.
\end{abstract}

\maketitle

\section{Introduction}\label{sec:intro}
Intuitively, regularity of functions is related to size and complexity of their graphs---the more regular a function is, the smaller one expects the dimension of its graph to be. Perhaps the most popular notion of uniform regularity for real functions is Hölder regularity. A function $f\colon [0,1]\to \R$ is \emph{$\alpha$-Hölder} if there is a constant $C>0$ such that for all $x,y\in [0,1]$, we have
\begin{equation*}
| f(x)-f(y)|\leq C|x-y|^\alpha.
\end{equation*} 
Indeed, $\alpha$-Hölder regularity gives an upper bound of $2-\alpha$ for the upper Minkowski dimension of the graph; see \cite{MR1102677}. For many classically studied Hölder functions in fractal geometry, such as the Weierstrass and Takagi functions, the Hölder condition is in some sense sharp and for these functions, the Minkowski dimension exists and is equal to $2-\alpha$; see e.g. \cite[§11]{MR1102677}. While the Minkowski dimension of the graph is relatively easy to calculate with fairly elementary methods, calculating the Hausdorff dimension of the graph usually requires more sophisticated tools. The long standing open problem of determining the Hausdorff dimension of the graph was recently solved by Bárány, Hochman, and Rapaport \cite{BaranyHochmanRapaport2019} for the Takagi function and by Shen \cite{Shen2018} for the Weierstass function; see also Shen and Ren \cite{RenShen2021} who proved a nice dichotomy result for more general Weierstrass-type functions. The proofs of these results depend heavily on the dynamics underlying the graphs of these functions and are based on careful transversality estimates in \cite{Shen2018} and powerful entropy methods introduced by Hochman \cite{Hochman2014} in \cite{BaranyHochmanRapaport2019,RenShen2021}.

A common theme in many areas of mathematics is that while it is often difficult to obtain information on specific objects, allowing slight perturbations in their structure considerably simplifies the situation. This will be the case in this paper as well: we will study the Hausdorff dimension of the graphs and level sets of typical Hölder functions. In functional analysis, there are two common definitions for typicality: a topological notion of Baire typicality and a measure theoretic notion of prevalence.
\begin{definition}
  A subset $A$ of a Banach space $X$ is \emph{meagre} if it is a countable union of nowhere dense sets. A set $A\subset X$ is \emph{comeagre} or \emph{Baire typical} if its complement is meagre.
\end{definition}

\begin{definition}
  A subset $A$ of a Banach space $X$ is \emph{shy} if there exists a Borel measure $\mu$ and a Borel set $B\subset X$, such that $A\subset B$, $0<\mu(K)<\infty$ for some compact set $K\subset X$ and $\mu(x+B)=0$, for all $x\in X$. A set $A$ is \emph{prevalent} if its complement is shy.
\end{definition}
The definition of prevalence is rather abstract and a more concrete way of proving that a set $A\subset X$ is prevalent, which we will employ in this paper, is by showing the existence of a $k$-dimensional subspace $S \subset X$, called the \emph{probe space}, with the property that for any $x \in X$, we have $x+s \in B$ for $\LL$-almost all $s \in S$, where $\LL$ is the Lebesgue measure on $S$ and $B$ is a Borel set contained in $A$. In this case, we say that $A$ is \emph{$k$-prevalent}. It is straightforward from the definitions that a set admitting a probe space is prevalent: Indeed, for $\mu=\LL$ one can take $K$ to be the closed unit ball in $S$ and then
\begin{equation*}
  \mu(x+X\setminus B)=\LL(\{s\in S \colon -x+s\not\in B\})=0
\end{equation*}
for all $x \in X$.

Curiously, the geometric properties of typical continuous functions depend heavily on which notion of typicality is used. Going forward, for any real function $f$ we let $\graph(f)$ denote the graph of $f$ and we denote the Banach space of continuous real functions on the unit interval by $\CC([0,1])$. The first statement in the following theorem was proved by Mauldin and Williams \cite{MauldinWilliams1986} and the second one by Fraser and Hyde \cite{FraserHyde2012}.
\begin{theorem}[Mauldin-Williams, Fraser-Hyde]\leavevmode\label{thm:prevalent-continuous}
  \begin{enumerate}
    \item\label{it:prevalent-continuous-1} A Baire typical function $f\in \CC([0,1])$ satisfies
    \begin{equation*}
      \dimh(\graph(f))=1.
    \end{equation*}
    \item\label{it:prevalent-continuous-2} A prevalent function $f\in \CC([0,1])$ satisfies
    \begin{equation*}
      \dimh(\graph(f))=2.
    \end{equation*}
  \end{enumerate}
\end{theorem}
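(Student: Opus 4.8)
The plan is to treat the two halves separately; in each, one inequality is immediate and all the work is in the other. For the easy directions: every $f\in\CC([0,1])$ has $\graph(f)\subseteq[0,1]\times\R$, and the coordinate projection $(x,y)\mapsto x$ restricts to a surjective $1$-Lipschitz map $\graph(f)\to[0,1]$, so $\dimh(\graph(f))\geq\dimh([0,1])=1$ for \emph{every} $f$, which is the lower bound in \eqref{it:prevalent-continuous-1}; likewise $\graph(f)\subseteq[0,1]^2$ forces $\dimh(\graph(f))\leq2$, the upper bound in \eqref{it:prevalent-continuous-2}. So it remains to show that a Baire typical $f$ has $\dimh(\graph(f))\leq1$ and that a prevalent $f$ has $\dimh(\graph(f))\geq2$.

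For \eqref{it:prevalent-continuous-1} I would show that $G:=\{f\in\CC([0,1]):\dimh(\graph(f))\leq1\}$ is a dense $G_\delta$. Using compactness of $\graph(f)$ and the fact that $\HH^s(E)=0$ iff for every $\eta>0$ there is a finite open cover $\{U_i\}$ of $E$ with $\sum_i(\diam U_i)^s<\eta$, one writes $G=\bigcap_{n,j\in\N}V_{n,j}$, where $V_{n,j}$ is the set of $f$ admitting a finite open cover $\{U_i\}$ of $\graph(f)$ with $\sum_i(\diam U_i)^{1+1/n}<1/j$. Each $V_{n,j}$ is open, since if $\{U_i\}$ is such a cover then the compact set $\graph(f)$ lies in the open set $\bigcup_iU_i$, hence so does $\graph(g)$ for every $g$ close to $f$ in the uniform norm. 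Each $V_{n,j}$ is also dense: piecewise linear functions are dense in $\CC([0,1])$, and the graph of such a function is a polygonal arc of some finite length $L$, which for any $\delta>0$ is cut into $O(L/\delta)$ pieces of diameter $\leq\delta$ and covered by open sets with $\sum_i(\diam U_i)^{1+1/n}=O(L\delta^{1/n})\to0$. Thus $G$ is comeagre, and with the trivial lower bound this gives \eqref{it:prevalent-continuous-1}.

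For \eqref{it:prevalent-continuous-2} I would use a one-dimensional probe space $S=\linspan\{g\}$, where $g\in\CC([0,1])$ is a Besicovitch--Ursell-type function with no positive Hölder exponent, for instance $g=\sum_{n}2^{-\sqrt{n}}\cos(2^n\pi x)$ (which is nowhere differentiable, hence constant on no interval); the relevant feature of such a $g$ is that, for Lebesgue-typical pairs $(x,y)$ with $|x-y|$ of dyadic size $r$, the increment $|g(x)-g(y)|$ is of size $r^{o(1)}$ rather than $r^{\alpha}$ for a fixed $\alpha>0$, with the law of $g(x)-g(y)$ having bounded density near the origin. Fix an arbitrary $f\in\CC([0,1])$ and, for $s\in(1,2)$, average the Riesz $s$-energy of the lifted Lebesgue measure over the probe parameter:
\[
  \int_0^1\iint\bigl(|x-y|^2+\bigl|(f(x)+tg(x))-(f(y)+tg(y))\bigr|^2\bigr)^{-s/2}\dd x\dd y\dd t.
\]
By Fubini and, for fixed $x\neq y$ with $g(x)\neq g(y)$, the substitution $u=(f(x)-f(y))+t(g(x)-g(y))$ in the inner integral, the $t$-integral is at most $C_s\min\bigl(|x-y|^{-s},\,|x-y|^{1-s}/|g(x)-g(y)|\bigr)$, using that $1<s<2$ makes $\int_\R(|x-y|^2+u^2)^{-s/2}\dd u$ finite (the set $\{g(x)=g(y),\,x\neq y\}$ is $\LL\otimes\LL$-null and contributes nothing). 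A dyadic decomposition in $|x-y|$, using the properties of $g$ to bound how often $|g(x)-g(y)|$ is anomalously small, then shows $\iint\min\bigl(|x-y|^{-s},|x-y|^{1-s}/|g(x)-g(y)|\bigr)\dd x\dd y<\infty$ for every $s<2$: the pairs at scale $r$ with $|g(x)-g(y)|\approx r^a$ contribute of order $r^{2-s-o(1)}$ for every $a>0$, and the resulting geometric-type series over dyadic scales converges precisely because $s<2$. Hence the displayed integral is finite, so for $\LL$-a.e.\ $t$ the measure $(\mathrm{id},f+tg)_*\LL|_{[0,1]}$ has finite $s$-energy and $\dimh(\graph(f+tg))\geq s$; intersecting the conull $t$-sets along $s_j\uparrow2$ yields $\dimh(\graph(f+tg))=2$ for $\LL$-a.e.\ $t$, for every $f$, which is exactly prevalence of $\{f:\dimh(\graph(f))=2\}$ with probe space $S$.

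The main obstacle is the construction and analysis of the probe function $g$. The Fubini argument forces $g$ to have no positive Hölder exponent, since a fixed exponent $\alpha>0$ only yields $\dimh\geq2-\alpha$ for a.e.\ $t$; at the same time uniform convergence demands summable amplitudes, so one works exactly at the threshold where Besicovitch--Ursell-type functions live. Moreover one needs quantitative control of the near-diagonal distribution of the increments $g(x)-g(y)$ at \emph{every} dyadic scale, not merely along a lacunary subsequence: if $g$ were smooth at the intermediate scales, those scales would contribute divergently to the energy integral. Producing a single $g$ that simultaneously has an honest log-type modulus of continuity at all scales and suitably spread-out increments is the technical heart; the rest is routine potential theory (this part of the argument is in the spirit of Kahane's computation for Brownian graphs) together with Baire-category bookkeeping.
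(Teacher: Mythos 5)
The paper does not reprove this theorem: it is quoted as background from Mauldin--Williams and Fraser--Hyde (see the citations surrounding the statement), so there is no in-paper argument to compare against. I therefore assess your proposal on its own merits.

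Your treatment of part \cref{it:prevalent-continuous-1} is correct. The trivial lower bound via the $1$-Lipschitz projection onto the $x$-axis, the decomposition $G = \bigcap_{n,j} V_{n,j}$ using the characterisation of $\HH^s(E)=0$ by finite open covers of the compact set $\graph(f)$, openness of each $V_{n,j}$ via the uniform-norm tube argument, and density via piecewise-linear functions all go through.

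For part \cref{it:prevalent-continuous-2}, your potential-theoretic framework is the right one and is structurally parallel to \cref{prop:graph-big} of this paper: average the Riesz $s$-energy of $(\id,f+tg)_*\LL^1$ over the probe parameter, apply Fubini, and use a change of variables in $t$ to extract a factor $|g(x)-g(y)|^{-1}$. The analogue of the bi-H\"older lower bound $\|\Phi(x)-\Phi(y)\|\ge c_1|x-y|^\alpha$ that makes this work in the H\"older setting cannot exist when $\alpha=1$ (it would force $g$ to be bi-Lipschitz, giving no dimension gain), so you correctly seek a replacement: a measure-theoretic anti-concentration estimate
\[
  \LL^2\bigl(\{(x,y): |x-y|\approx r,\ |g(x)-g(y)|\le \eta\}\bigr)\ \lesssim\ r\,\eta/\omega(r)
\]
uniformly over all dyadic scales $r$, where $\omega(r)=r^{o(1)}$ is the modulus of continuity of $g$. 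This is precisely what you need for your dyadic decomposition to yield $\iint\min(|x-y|^{-s},|x-y|^{1-s}/|g(x)-g(y)|)\dd x\dd y<\infty$ for every $s<2$, and it is exactly here that the proposal has a genuine gap: you assert that a Besicovitch--Ursell-type function such as $\sum_n 2^{-\sqrt{n}}\cos(2^n\pi x)$ enjoys this spread-out-increments property, but no argument is given. Establishing a uniform-in-scale lower bound on the ``spread'' of the increments $g(x)-g(y)$ for a concrete deterministic series of this type is a transversality statement of the same difficulty class as the Hu--Lau and Shen estimates cited in the paper, and it is not a routine verification. You flag this yourself as ``the technical heart,'' which is an accurate self-assessment, but until it is supplied the finiteness of the averaged energy integral — and with it the prevalence claim — is not proved. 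The remaining potential-theoretic bookkeeping (the $t$-integral substitution bound $C_s\min(|x-y|^{-s},|x-y|^{1-s}/|g(x)-g(y)|)$ for $1<s<2$, intersection of conull sets along $s_j\uparrow 2$) is correct.
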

These results highlight a striking difference in the behaviour of typical functions when typicality is interpreted in the topological sense or in the measure theoretic sense. Topologically, typical continuous functions seem to be fairly regular since their graphs have the smallest possible Hausdorff dimension while in the measure theoretic sense, typical functions are highly irregular as their graphs have maximal Hausdorff dimension. Since one intuitively expects typical functions to behave in quite a chaotic fashion, prevalence is perhaps the better notion of typicality when studying the dimensions of the graphs of functions.

For Hölder functions, the following result, which is analogous to \cref{thm:prevalent-continuous}\cref{it:prevalent-continuous-2}, was originally proved by Clausel and Nicolay \cite{ClauselNicolay2010}, but we provide a short proof in \cref{prop:graph-big}; see also \cite{BayartHeurteaux2013} for a generalisation. Recall that the family of all real valued $\alpha$-H\"older functions defined on $[0,1]$, which we denote by $C^\alpha([0,1])$, equipped with the standard norm 
\begin{equation*}
  \|f\|_\alpha = \sup_{x \in [0,1]}|f(x)| + \sup_{(x,y) \in [0,1]^2} \frac{|f(x)-f(y)|}{|x-y|^\alpha}
\end{equation*}
is a Banach space.

\begin{theorem}[Clausel-Nicolay]\label{thm:prevalent-dim}
  A prevalent function $f\in \CC^{\alpha}([0,1])$ satisfies
  \begin{equation*}
    \dimh(\graph(f))=2-\alpha.
  \end{equation*}
\end{theorem}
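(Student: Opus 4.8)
I would first dispose of the upper bound, which needs no genericity: by \cite{MR1102677}, $\udimm(\graph(f))\le 2-\alpha$ for \emph{every} $f\in C^\alpha([0,1])$, and since $\dimh\le\udimm$ always, $\dimh(\graph(f))\le 2-\alpha$ for all such $f$. The task is therefore to exhibit a finite-dimensional probe space $S\subset C^\alpha([0,1])$ with the property that $\dimh(\graph(f+g))\ge 2-\alpha$ for every $f\in C^\alpha([0,1])$ and $\LL$-almost every $g\in S$; the plan is to get this lower bound from the potential-theoretic (energy) method, the crux being a good choice of $S$.

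For $S$ I would use the coordinates of an Assouad embedding. Since $[0,1]$ with the Euclidean metric is doubling, its $\alpha$-snowflake $([0,1],|x-y|^\alpha)$ admits a bi-Lipschitz embedding into $\R^N$ for some $N=N(\alpha)$: there are $L\ge 1$ and $G=(g_1,\dots,g_N)\colon[0,1]\to\R^N$ with $L^{-1}|x-y|^\alpha\le|G(x)-G(y)|\le L|x-y|^\alpha$ for all $x,y\in[0,1]$. The upper inequality shows each $g_i\in C^\alpha([0,1])$, so $S:=\linspan\{g_1,\dots,g_N\}$ is an admissible probe space, while the lower inequality $|G(x)-G(y)|\gtrsim|x-y|^\alpha$ is what will power the estimate.

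Then I would fix $f\in C^\alpha([0,1])$ and an exponent $1<s<2-\alpha$ (nonempty since $\alpha<1$), set $h_t:=f+\sum_i t_ig_i$ for $t\in\R^N$, and let $\mu_t$ be the image of Lebesgue measure on $[0,1]$ under $x\mapsto(x,h_t(x))$, a probability measure carried by $\graph(h_t)$. Writing $I_s(\mu)=\iint|u-v|^{-s}\dd\mu(u)\dd\mu(v)$ and using $h_t(x)-h_t(y)=(f(x)-f(y))+\la G(x)-G(y),t\ra$, Tonelli gives, for a unit cube $Q\subset\R^N$,
\[
  \int_Q I_s(\mu_t)\dd t=\int_0^1\!\!\int_0^1\int_Q\frac{\dd t\dd x\dd y}{\bigl(|x-y|^2+(f(x)-f(y)+\la G(x)-G(y),t\ra)^2\bigr)^{s/2}}.
\]
For $x\ne y$ the vector $b:=G(x)-G(y)$ is nonzero; decomposing $\R^N$ into the line $\R b$ and its orthogonal complement and using $s>1$ bounds the inner integral by $C_{s,N}|b|^{-1}|x-y|^{1-s}$, and then $|b|\ge L^{-1}|x-y|^\alpha$ yields
\[
  \int_Q I_s(\mu_t)\dd t\le C_{s,N}L\int_0^1\!\!\int_0^1|x-y|^{\,1-s-\alpha}\dd x\dd y<\infty,
\]
the last integral converging exactly because $s<2-\alpha$. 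Hence $I_s(\mu_t)<\infty$, so $\dimh(\graph(h_t))\ge s$, for $\LL$-a.e.\ $t\in Q$; covering $\R^N$ by countably many unit cubes and letting $s\uparrow 2-\alpha$ along a sequence gives $\dimh(\graph(h_t))\ge 2-\alpha$ for $\LL$-a.e.\ $t\in\R^N$, i.e.\ for $\LL$-a.e.\ $g\in S$.

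The main obstacle, and the only place real thought is needed, is the probe space: a one-dimensional probe $\{tg\}$ would force the energy bound to require a single $\alpha$-Hölder $g$ with $|g(x)-g(y)|\gtrsim|x-y|^\alpha$ for all $x,y$, which cannot exist, as it would bi-Lipschitzly embed the $\alpha$-snowflake of $[0,1]$ --- a metric space of Hausdorff dimension $1/\alpha>1$ --- into $\R$. Assouad's theorem is precisely the device that circumvents this by spreading the increment across several coordinates, and with it in hand the energy bound is routine; the remaining points (the measurability needed for Tonelli, and upgrading ``a.e.\ on each cube'' to ``a.e.\ on $S$'') are standard.
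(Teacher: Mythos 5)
Your proof is correct and follows essentially the same route as the paper: the probe space is spanned by the coordinate functions of an Assouad embedding $G\colon[0,1]\to\R^N$ of the $\alpha$-snowflaked interval, and the lower bound comes from averaging the $s$-energy of the lifted Lebesgue measure $\mu_t$ over the translation parameters and invoking Lemma~\ref{lemma:energy-dim}. Your inner-integral reduction (projecting $t$ onto $\R b$ with $b=G(x)-G(y)$) is a minor variant of the paper's device of integrating out the coordinate on which $\|\Phi(x)-\Phi(y)\|_\infty$ is attained; both give the same $|b|^{-1}|x-y|^{1-s}$ bound. One small remark worth recording: your range $1<s<2-\alpha$ is the correct one. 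The paper states the finite-energy estimate for $I_{2-\beta}(\mu_\ttt)$ with ``$0<\beta<\alpha$,'' but its own final integrand is $|x-y|^{-(1-\beta+\alpha)}$ together with $\int(1+w^2)^{\beta/2-1}\dd w$, whose joint convergence requires $\alpha<\beta<1$, i.e.\ $1<2-\beta<2-\alpha$ --- exactly your parametrization, with $s=2-\beta$ and $\beta\downarrow\alpha$ corresponding to $s\uparrow 2-\alpha$.
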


A related approach for studying the complexity of a function $f$ is looking at the structure of its level sets $f^{-1}(\{y\})$. It is expected that, if there is no obvious obstruction, the dimension of the level set $f^{-1}(\{y\})$ should be at most $\dimh(\graph(f))-1$. Indeed, it follows from the classical Marstrand's slicing theorem, see i.e. \cite[Theorem 1.6.1]{BishopPeres2016}, that
\begin{equation}\label{eq:slicing}
  \dimh(f^{-1}(\{y\}))\leq \dimh(\graph(f))-1
\end{equation}
for Lebesgue almost every $y\in f([0,1])$. For the Takagi function, the authors proved in \cite{AnttilaBaranyKaenmaki2023}, that if one replaces the Hausdorff dimension by the Assouad dimension (see \cite{Fraser2021} for the definition) on the right hand side of \cref{eq:slicing}, then the upper bound holds for \emph{all} slices of the graph and is sharp.

For prevalent continuous functions, the following result on the size of the level sets was proved by Balka, Darji, and Elekes \cite{BalkaDarjiElekes2016}. The Lebesgue measure on $\R^d$ is denoted by $\LL^d$.
\begin{theorem}[Balka-Darji-Elekes]\label{thm:balka-darji-elekes}
  For a prevalent $f\in\CC([0,1])$, there is an open set $U_f\subset f([0,1])$ with $\LL^1(f^{-1}(U_f))=1$ such that for all $y\in U_f$ we have
  \begin{equation*}
    \dimh(f^{-1}(\{y\}))=1.
  \end{equation*}
\end{theorem}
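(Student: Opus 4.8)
The upper bound $\dimh(f^{-1}(\{y\}))\le 1$ is automatic, since $f^{-1}(\{y\})\subseteq[0,1]$; the whole content is the matching lower bound for a large set of levels~$y$. I would aim at the clean target that a prevalent $f\in\CC([0,1])$ satisfies $\dimh(f^{-1}(\{y\}))=1$ for \emph{every} $y\in\inter(f([0,1]))$, together with $\LL^1(f^{-1}(\{\min f,\max f\}))=0$, so that one may take $U_f=\inter(f([0,1]))$. Since a countable intersection of sets that are prevalent with respect to one fixed probe space is again prevalent with that probe, it suffices to produce a single finite-dimensional probe $S\subset\CC([0,1])$ such that, for every $f\in\CC([0,1])$, every $\eps>0$, and $\LL$-a.e.\ $s\in S$, the occupation measure $\nu_{f+s}=(f+s)_*\LL^1$ is absolutely continuous with density positive and locally bounded on $\inter((f+s)([0,1]))$, and every fibre over this open set has Hausdorff dimension at least $1-\eps$.

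For $S$ I would take the span of finitely many ``maximally rough'' functions $g_1,\dots,g_N$: lacunary Weierstrass-type series on widely separated frequency bands, with amplitudes tuned so that every nonzero $\sum_i t_i g_i$ is continuous, has vanishing Hölder exponent (so the \emph{same} probe serves all $\eps$ at once), has infinite variation on every subinterval, and has the roughest graph the amplitudes permit. Building the probe from functions of infinite variation plays a structural role: for a fixed $f$, several of the bad parameter configurations one must discard --- the $s$ for which $f+s$ is monotone on some prescribed interval, and more generally those on which a relevant energy integral diverges for trivial reasons --- form \emph{convex} subsets of $S$, and a convex family of continuous functions closed under the operations in play cannot contain a nondegenerate segment once infinite variation is enforced; hence these sets are $\LL$-null. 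This is what allows one fixed, finite-dimensional probe to succeed uniformly over all $f$, even though $f$ may be far rougher than any single $g_i$.

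The heart of the argument is the quantitative lower bound, which I would obtain from the occupation measure and its disintegration. For $\LL$-a.e.\ $s$ one first shows that $\nu_{f+s}\ll\LL^1$ with density positive and locally bounded on $\inter((f+s)([0,1]))$; this gives the open set of levels, and, since absolute continuity makes \emph{every} single fibre Lebesgue-null, also disposes of the two extremal levels. Disintegrating $\LL^1$ over $f+s$ produces, for $\nu_{f+s}$-almost every $y$ in this open set, a probability measure $\sigma_{s,y}$ carried by $(f+s)^{-1}(\{y\})$; by Frostman's lemma, $\dimh((f+s)^{-1}(\{y\}))\ge 1-\eps$ whenever the $(1-\eps)$-energy of $\sigma_{s,y}$ is finite. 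It therefore remains to prove
\begin{equation*}
  \int_S\int_{\R}\iint_{[0,1]^2}\frac{\sigma_{s,y}(\dd x)\,\sigma_{s,y}(\dd x')}{|x-x'|^{1-\eps}}\,\dd\nu_{f+s}(y)\,\dd\LL(s)<\infty ,
\end{equation*}
for then, for $\LL$-a.e.\ $s$ and (for such $s$) $\nu_{f+s}$-a.e.\ $y$, the $(1-\eps)$-energy of $\sigma_{s,y}$ is finite. By Fubini, the inner triple integral is the $(1-\eps)$-energy of the self-joining of $\LL^1$ along $f+s$, a measure carried by $\{(x,x'):(f+s)(x)=(f+s)(x')\}$, so the estimate collapses to a second-moment bound for the ``coincidences'' $(f+s)(x)=(f+s)(x')$, averaged over $s$; one controls it using the vanishing Hölder exponent and transversality of the $g_i$, and here the arbitrary $f$ can only help, since adding it cannot decrease the relevant oscillations on average over~$s$.

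Granting the displayed estimate, the set of $f$ for which $f+s$ has the stated property for $\LL$-a.e.\ $s$ is all of $\CC([0,1])$, hence prevalent with probe $S$; intersecting over $\eps=1/n$ with the same $S$ and taking $U_f=\inter(f([0,1]))$ finishes the proof. I expect the displayed energy bound to be the genuine obstacle: it concerns the joint occupation density of $f+s$ at two points at once and must hold uniformly over the arbitrary continuous $f$ --- it is precisely the interplay between the infinite-variation/convexity structure of the probe and this second-moment control that yields the required uniformity in~$f$.
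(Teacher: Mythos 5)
The paper does not actually prove this statement---it is quoted from Balka, Darji, and Elekes \cite{BalkaDarjiElekes2016}---so the meaningful comparison is with the paper's proof of the Hölder analogue (\cref{thm:main}\cref{it:main2}), whose outline your proposal does track: occupation measures, Rohlin disintegration, Frostman's lemma, and an energy integral averaged over a finite-dimensional probe.

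Two genuine gaps remain. First, the probe: you need the span of your lacunary Weierstrass-type $g_i$'s to satisfy a quantitative transversality---the $\LL$-measure of $\{s:|(f+s)(x)-(f+s)(x')|<r\}$ must be $O(r/\delta(x,x'))$ with $\delta(x,x')$ decaying slower than any positive power of $|x-x'|$, uniformly in the arbitrary $f$---because this is exactly what the energy integral consumes. Your convexity/infinite-variation heuristic at best shows that certain exceptional parameter sets are $\LL$-null; it produces no quantitative density bound. The paper gets its corresponding bound not from roughness heuristics but from the Assouad embedding theorem (\cref{lemma:phi-existence}): the inequality $\|\Phi(x)-\Phi(x')\|\ge c|x-x'|^\alpha$ is what makes the change of variables in \cref{prop:graph-big} converge, and for a Weierstrass-type substitute the authors explicitly record in \cref{ques:bi-holder} that they could not construct one. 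Your insistence on a single probe serving all $\eps$ simultaneously is also unforced---prevalence is closed under countable intersections even when the probes vary---so one could simply take the $(1/n)$-bi-Hölder Assouad map for each $n$; it is this unnecessary constraint that pushes you toward the unverified lacunary construction. Second, the target is overclaimed. Rohlin plus Fatou gives finite $(1-\eps)$-energy of $\sigma_{s,y}$ only for $\nu_{f+s}$-almost every $y$, not for every $y\in\inter((f+s)([0,1]))$. Balka--Darji--Elekes require an \emph{open} $U_f$ of full preimage measure on which \emph{every} level works; promoting ``a.e.\ $y$'' to ``every $y$ in an open set'' needs an additional idea (some continuity or semicontinuity in $y$, or a direct construction) which you do not supply. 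Indeed this gap is precisely why the paper's own Hölder analogue retreats to the weaker conclusion ``positively many $y$'' in \cref{thm:main}\cref{it:main2}.
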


Questions were raised in \cite[Problem 9.2]{BalkaDarjiElekes2016}, whether similar results hold for prevalent Hölder functions. In the main result of this paper, we answer the second question of \cite[Problem 9.2]{BalkaDarjiElekes2016} by showing that for a prevalent $\alpha$-Hölder function, the Hausdorff dimension of Lebesgue positively many level sets is $1-\alpha$. Moreover, we strengthen \cref{eq:slicing} by showing that for prevalent Hölder functions the upper Minkowski dimension of \emph{every} level set is at most $1-\alpha$.

\begin{theorem}[Main Theorem] \label{thm:main}
  A prevalent function $f\in C^{\alpha}([0,1])$ satisfies
  \begin{enumerate}
    \item\label{it:main1} $\udimm(f^{-1}(\{y\})) \le 1-\alpha$ for all $y \in \R$ provided that $0<\alpha<\frac12$,
    \item\label{it:main2} $\LL^1(\{y \in f([0,1]) \colon \dimh(f^{-1}(\{y\})) = 1-\alpha\})>0$ provided that $0<\alpha<1$.
  \end{enumerate}
\end{theorem}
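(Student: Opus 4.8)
The plan is to deduce both parts of \cref{thm:main} from a single ``occupation‑measure regularity lemma'': for $0<\alpha<\tfrac12$, a prevalent $g\in C^{\alpha}([0,1])$ satisfies $g_*\LL^1\ll\LL^1$ with density $L_g$ lying in $L^q(\R)$ for every $q<\infty$. For the probe I would use a two–dimensional space $S=\linspan\{W_1,W_2\}$, where the hypothesis $\alpha<\tfrac12$ enters only in this choice: $W_1,W_2\in C^{\alpha}([0,1])$ are selected so that the planar curve $\Phi=(W_1,W_2)\colon[0,1]\to\R^2$ has occupation measure $\Phi_*\LL^1$ absolutely continuous with a bounded, compactly supported density $\rho_\Phi$ — the existence of such $\Phi$ being a feature precisely of the range $\alpha\le\tfrac12$, realised at the endpoint by a Peano‑type curve. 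A perturbation of $f\in C^{\alpha}([0,1])$ is written as $g_s=f+\langle s,\Phi\rangle$ with $s\in\R^2$, which is again $\alpha$‑Hölder. Since a finite intersection of prevalent sets is prevalent (one adds the probe spaces), this probe may be combined with the one behind the proof of \cref{prop:graph-big}, so below I freely assume $g$ lies simultaneously in the regularity set and in the prevalent set on which $\graph(g)$ carries a measure of full dimension.

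Granting the regularity lemma, part \cref{it:main1} is a covering argument. Fix such a $g$, with Hölder constant $C$, and fix $y\in\R$. If a dyadic interval $I$ of length $2^{-n}$ meets $g^{-1}(\{y\})$, then $|g-y|\le C2^{-n\alpha}$ on $\overline I$, so the union of all such $I$ lies in $g^{-1}(B(y,C2^{-n\alpha}))$; as these intervals have disjoint interiors their number $N_n(y)$ obeys, for every $q<\infty$,
\begin{equation*}
  N_n(y)\,2^{-n}\le\LL^1\big(g^{-1}(B(y,C2^{-n\alpha}))\big)=\int_{B(y,C2^{-n\alpha})}L_g\le\|L_g\|_{L^q}\,(2C\,2^{-n\alpha})^{1-1/q}.
\end{equation*}
Hence $\udimm(g^{-1}(\{y\}))\le 1-\alpha+\alpha/q$; letting $q\to\infty$ gives the bound $1-\alpha$, uniformly in $y$, which is exactly what lets it hold for \emph{every} level set.

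For part \cref{it:main2} I would, in addition, use that the graph measure $\nu:=(x\mapsto(x,g(x)))_*\LL^1$ has finite $\sigma$‑energy $I_\sigma(\nu)=\iint|u-v|^{-\sigma}\dd\nu(u)\dd\nu(v)<\infty$ for all $\sigma<2-\alpha$ — this being the lower bound proved (for this natural measure) en route to \cref{prop:graph-big}, the reverse inequality $\udimm(\graph(g))\le 2-\alpha$ being classical. Let $\pi(x,t)=t$, so that $\pi_*\nu=g_*\LL^1\ll\LL^1$ by the regularity lemma; then the vertical slices $\nu^y$ are defined for $\LL^1$‑a.e.\ $y$, are supported on $g^{-1}(\{y\})\times\{y\}$ with total mass $L_g(y)$, and the classical slicing inequality for Riesz energies (cf.\ \cref{eq:slicing}) gives $\int_\R I_{\sigma-1}(\nu^y)\dd y\lesssim I_\sigma(\nu)<\infty$. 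Thus $I_{\sigma-1}(\nu^y)<\infty$ for $\LL^1$‑a.e.\ $y$, so $\dimh(g^{-1}(\{y\}))\ge\sigma-1$ for $\LL^1$‑a.e.\ $y$ in the positive–measure set $\{L_g>0\}$. Intersecting over a sequence $\sigma\uparrow 2-\alpha$ yields $\LL^1(\{y:\dimh(g^{-1}(\{y\}))\ge 1-\alpha\})>0$; such $y$ lie in $g([0,1])$, and combining with part \cref{it:main1} upgrades $\ge$ to $=$, which is \cref{it:main2}.

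The main obstacle is the regularity lemma itself — controlling the higher correlations of $g_s$. Fixing $q=2m$, it suffices to bound $\int_{\R^2}\|L_{g_s}\|_{L^{2m}}^{2m}\,\rho(s)\dd s$ for a fixed smooth compactly supported weight $\rho$, and the occupation–density formula makes this comparable to
\begin{equation*}
  \limsup_{\eps\to0}\ \eps^{1-2m}\int_{[0,1]^{2m}}\mathbb P_s\big(\diam\{g_s(x_1),\dots,g_s(x_{2m})\}<2\eps\big)\dd x_1\cdots\dd x_{2m},
\end{equation*}
where $\mathbb P_s$ refers to $\rho(s)\dd s$. The diameter event puts $2m-1$ affine conditions $|\langle s,\Phi(x_1)-\Phi(x_j)\rangle-(f(x_j)-f(x_1))|<2\eps$ on $s\in\R^2$; two transverse ones confine $s$ to a parallelogram of area $\lesssim\eps^2/\Delta$, with $\Delta$ the area of a triangle $\Phi(x_1),\Phi(x_j),\Phi(x_k)$, which is non‑degenerate off a small set of $(x_1,x_j,x_k)$ because $\rho_\Phi$ is bounded; each of the remaining $2m-3$ conditions then pins an extra $x_i$ to a set whose measure is governed by the boundedness of the one‑dimensional marginal densities of $x\mapsto\langle s,\Phi(x)\rangle$, again a consequence of $\rho_\Phi\in L^\infty$ with compact support. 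The delicate point is to retain the honest $\eps$‑power through this bookkeeping so that the $\eps^{1-2m}$ prefactor is absorbed, and to do so while circumventing the apparent circularity of needing an $L^\infty$ bound on the very density one is estimating; this is where most of the work sits, and it is why $\alpha<\tfrac12$ is indispensable, no $\alpha$‑Hölder planar curve having an absolutely continuous occupation measure otherwise. As a sanity check the case $m=1$ is immediate: the inner probability is $\lesssim\eps/|\Phi(x_1)-\Phi(x_2)|$, so everything reduces to the finiteness of $\iint_{[0,1]^2}|\Phi(x_1)-\Phi(x_2)|^{-1}\dd x_1\dd x_2=I_1(\Phi_*\LL^1)$, which holds since $\rho_\Phi$ is bounded with compact support; thus $L_{g_s}\in L^2$ for a.e.\ $s$ is already in hand from the two‑dimensional probe.
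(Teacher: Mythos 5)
Your proposal diverges substantially from the paper's argument, and both of the two main steps you rely on have genuine problems.

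\textbf{The fixed-direction slicing inequality is false.} For part \cref{it:main2} you write that ``the classical slicing inequality for Riesz energies (cf.\ \cref{eq:slicing}) gives $\int_\R I_{\sigma-1}(\nu^y)\dd y\lesssim I_\sigma(\nu)$''. No such inequality holds for a single fixed direction of slicing. Passing to Fourier transforms, with the normalisation you adopt ($\nu^y$ has mass $L_g(y)$ and $\dd\nu=\dd\nu^y\dd y$), one has the identity
\begin{equation*}
  \int_\R I_{\sigma-1}(\nu^y)\dd y = c\iint_{\R^2} |\hat\nu(\xi_1,\xi_2)|^2 |\xi_1|^{\sigma-2}\dd\xi,
  \qquad
  I_\sigma(\nu) = c'\iint_{\R^2} |\hat\nu(\xi)|^2 |\xi|^{\sigma-2}\dd\xi,
\end{equation*}
and since $\sigma<2$ and $|\xi_1|\le|\xi|$ one has $|\xi_1|^{\sigma-2}\ge|\xi|^{\sigma-2}$, so the inequality goes the \emph{wrong} way: $\int I_{\sigma-1}(\nu^y)\dd y\gtrsim I_\sigma(\nu)$, with no reverse control. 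Concretely, for $\nu$ uniform on $[0,1]\times[0,\eps]$ one has $\int I_{\sigma-1}(\nu^y)\dd y\sim\eps^{-1}$ while $I_\sigma(\nu)\sim\eps^{1-\sigma}$, so the ratio blows up as $\eps\to0$ for every $1<\sigma<2$. The classical slicing bound you have in mind requires an average over directions $\theta$; this is exactly the issue the paper handles in \cref{thm:graph-slices-big}, which is proved via Rohlin disintegration and the Mattila estimate $\LL^1(\{\theta:|\proj_\theta(a)-\proj_\theta(b)|<r\})\lesssim r/\|a-b\|$ (valid only as a $\theta$-average), yielding the conclusion for $\LL^1$-a.e.\ $\theta$. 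The paper then recovers the vertical direction by enlarging the probe with the Lipschitz functions $x\mapsto x\cot\theta$, which rotates an a.e.\ direction into the vertical one. Your sketch skips this step entirely, and as written part \cref{it:main2} does not go through.

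\textbf{The regularity lemma is unproved.} Your entire argument for part \cref{it:main1}, and the absolute continuity $g_*\LL^1\ll\LL^1$ needed in part \cref{it:main2}, rest on the claim that prevalently $g_*\LL^1$ has density in $L^q$ for every $q<\infty$. You explicitly acknowledge that this is ``the main obstacle'', that the higher-moment estimates require ``retaining the honest $\eps$-power'' through delicate combinatorics, and that there is ``apparent circularity'' because the argument needs an $L^\infty$ bound on the very density being estimated. The only case you verify is $m=1$ (the $L^2$ bound). As it stands, this is a programme, not a proof. By contrast, the paper proves the stronger statement that the projected measure $\mu_{\ttt}^{\pi/2}$ is absolutely continuous with \emph{bounded continuous} density in \cref{thm:main-tech}, via a short Fubini/Riemann–Lebesgue computation using an $\alpha$-bi-H\"older embedding $\Phi\colon[0,1]\to\R^d$ (Assouad's theorem, \cref{lemma:phi-existence}) as probe. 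The bi-H\"older lower bound $\|\Phi(x)-\Phi(y)\|\gtrsim|x-y|^\alpha$ is what makes the single estimate $\iint|x-y|^{-\alpha N}\dd x\dd y<\infty$ close the argument; your Peano-type probe, being non-injective, has no such lower bound, and the occupation-density route is genuinely harder. Your covering argument in part \cref{it:main1} would be correct granted the lemma (it is essentially \cref{thm:dim-lemma} in disguise), so the gap is entirely in the missing lemma.

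In short: the structure is different from the paper (Peano-type 2D probe and occupation-density regularity versus bi-H\"older embedding and Fourier decay), but both of the two ingredients you lean on — the $L^q$ regularity for all $q$, and the fixed-direction slicing inequality — are respectively unestablished and false, and either one suffices to sink the argument.
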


The requirement $\alpha<\frac12$ in the first claim is a technical assumption which we need to prove the $k$-prevalence and, in particular, to show that the average Fourier decay of a certain measure is finite in the proof of \cref{thm:main-tech}. To overcome this obstacle and extend the result to $\frac12 \le \alpha < 1$, it might be possible to adapt methods on fractional Brownian motion by Erraoui and Hakiki \cite{ErraouiHakiki2022}. The proof of \cref{thm:main}, to which \cref{sec:proof} is devoted, relies on a combination of techniques from potential theory and Fourier analysis of measures.

\subsection{Weierstrass-type functions}
The original motivation for this paper was to study prevalent functions from the family of Weierstrass-type functions defined as follows. For $b\in\N$, $0<\alpha<1$, and a Lipschitz function $g$ on $\mathbb{S}^1$, define the \emph{Weierstrass-type function} $W_g^{\alpha,b}\colon \mathbb{S}^1\to\R$ by
\begin{equation*}
  W_g^{\alpha,b}(x)=\sum_{k=0}^{\infty}b^{-\alpha k}g(b^kx).
\end{equation*}
Given $b\in\N$ and $0<\alpha<1$ the family of corresponding Weierstass-type functions is denoted by $\WW^{\alpha,b}$. Note that the functions in this space are in one to one correspondence with the space $\mathrm{Lip}(\mathbb{S}^1)$ of Lipschitz functions on $\mathbb{S}^1$, which form a Banach space under the norm
\begin{equation*}
  \|f\|_{\mathrm{Lip}} = \sup_{x \in \mathbb{S}^1}|f(x)| + \sup_{(x,y) \in \mathbb{S}^1\times \mathbb{S}^1} \frac{|f(x)-f(y)|}{|x-y|}.
\end{equation*}
Thus we may identify $\WW^{\alpha,b}$ with the space $\mathrm{Lip}(\mathbb{S}^1)$ and when we talk about a prevalent function in $\WW^{\alpha,b}$ we mean prevalence under this identification. The family $\WW^{\alpha,b}$ includes many well studied functions such as the original Weierstass function where $g(x)=\cos(2\pi x)$ or the Takagi function where $g(x)=\dist(x,\Z)$ which were mentioned earlier in the introduction. We make the following conjecture:
\begin{conjecture} \label{conj:main}
  For any $b\in\N$ and $0<\alpha<\frac12$, a prevalent function $g \in \mathrm{Lip}(\mathbb{S}^1)$ satisfies
  \begin{enumerate}
    \item $\udimm((W_g^{\alpha,b})^{-1}(\{y\})) \le 1-\alpha$ for all $y \in \R$,
    \item $\dimh((W_g^{\alpha,b})^{-1}(\{y\})) = 1-\alpha$ for $\LL^1$-almost all $y\in W_g^{\alpha,b}([0,1])$.
  \end{enumerate}
\end{conjecture}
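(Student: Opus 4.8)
The plan is to construct a suitable probe space and combine potential-theoretic lower bounds for level sets with a Fourier-analytic upper bound for the Minkowski dimension. For the probe space, the natural candidate is the one-dimensional space spanned by a single carefully chosen $\alpha$-Hölder function $\fii$, so that the prevalence statement becomes: for every fixed $f \in C^\alpha([0,1])$, the functions $f + t\fii$ satisfy \cref{it:main1} and \cref{it:main2} for Lebesgue-almost every $t \in \R$. A good choice for $\fii$ is a ``generic'' $\alpha$-Hölder function whose increments behave like those of a fractional-Brownian-type object of Hurst index $\alpha$: concretely one can take $\fii$ to be a lacunary (Weierstrass-type) series with independent-like coefficients, or a sample path of fractional Brownian motion, which almost surely lies in $C^{\alpha'}([0,1])$ for every $\alpha' < \alpha$ and has the self-affine scaling we need. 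The key structural fact to extract from $\fii$ is a lower bound on the energy of occupation-type measures: for a level set of $f + t\fii$ at height $y$, one studies the pushforward under $x \mapsto x$ of the occupation measure of the process, and one wants to show that for a.e.\ $t$ and positively many $y$ the $(1-\alpha-\eps)$-energy of the associated measure is finite.

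The first step is the upper bound \cref{it:main1}. Here I would argue directly with the modulus of continuity. Fix $f \in C^\alpha([0,1])$; then $f + t\fii \in C^{\alpha}([0,1])$ for every $t$ with a Hölder constant $C = C(t)$. For $g \in C^\alpha([0,1])$ with constant $C$, a standard covering argument shows that $g^{-1}(\{y\})$ meets at most a bounded number of the dyadic intervals $[j\delta,(j+1)\delta]$ on which the oscillation of $g$ exceeds $\delta^\alpha$; since the oscillation on each such interval is at most $C\delta^\alpha$, the level set at height $y$ is covered by the intervals where $g$ comes within $C\delta^\alpha$ of $y$, and counting these via the total variation / oscillation budget of $g$ over $[0,1]$ gives at most $O(\delta^{\alpha-1})$ such intervals. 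This yields $\udimm(g^{-1}(\{y\})) \le 1-\alpha$ for \emph{every} $y$ and every $\alpha$-Hölder $g$, hence in particular for $g = f + t\fii$ and all $t$, which proves \cref{it:main1} with room to spare (it does not even need prevalence, only that $f + s \in C^\alpha$ for $s$ in the probe space). Note the hypothesis $\alpha < \tfrac12$ is not needed for this part.

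The second step, the lower bound \cref{it:main2}, is where the work and the main obstacle lie. The strategy is to produce, for a.e.\ $t$, a set of heights $y$ of positive Lebesgue measure and Frostman measures $\mu_{t,y}$ supported on $(f+t\fii)^{-1}(\{y\})$ with $\iint |x-x'|^{-s}\dd\mu_{t,y}(x)\dd\mu_{t,y}(x') < \infty$ for all $s < 1-\alpha$, whence $\dimh \ge 1-\alpha$ by the energy version of Frostman's lemma; combined with \cref{it:main1} this forces equality. The natural candidate for $\mu_{t,y}$ is the disintegration of Lebesgue measure on $[0,1]$ along the level sets of $g_t := f + t\fii$, i.e.\ the occupation measure: writing $\nu_{t,y}$ for the measure defined by $\int h \dd\nu_{t,y} = \tfrac{d}{dy}\int_{\{g_t \le y\}} h(x)\dd x$ where it exists. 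One computes, via Fubini in the $(t,y)$ variables, that
\begin{equation*}
  \int_\R \int_\R \iint_{[0,1]^2} |x-x'|^{-s} \dd\nu_{t,y}(x)\dd\nu_{t,y}(x')\dd y\dd t
  = \int_\R \iint_{[0,1]^2} |x-x'|^{-s} \, \Phi_t(x,x') \dd x\dd x'\dd t,
\end{equation*}
where $\Phi_t(x,x')$ is (formally) the value at $0$ of the density of $g_t(x)-g_t(x')$. The crux is to bound $\int_\R \Phi_t(x,x')\dd t \lesssim |x-x'|^{-\alpha}$ uniformly; this is exactly where one needs quantitative control on the distribution of $(f+t\fii)(x) - (f+t\fii)(x') = \big(f(x)-f(x')\big) + t\big(\fii(x)-\fii(x')\big)$ as $t$ varies, and it is precisely here that $\fii$ must be chosen so that $|\fii(x)-\fii(x')|$ is comparable to $|x-x'|^\alpha$ from \emph{below} on a large set of pairs — a reverse Hölder or anti-concentration estimate. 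For a lacunary Weierstrass-type $\fii$ this follows from classical estimates on the Fourier transform of the occupation measure (Kahane's work on the images and level sets of random Fourier series), which also explains the role of Fourier analysis flagged after the theorem; the restriction $\alpha < \tfrac12$ enters because one needs the occupation measure to have an $L^2$ density (equivalently $\int |\widehat{\nu_{t,y}}(\xi)|^2\dd\xi < \infty$), which is where $2\alpha < 1$ is used. Once the averaged energy is finite, a Fubini/Markov argument extracts a positive-measure set of good parameters $t$, and for each such $t$ a positive-measure set of heights $y$ with $\nu_{t,y}$ of finite $s$-energy; letting $s \uparrow 1-\alpha$ along a sequence and intersecting finishes the proof. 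The main obstacle, then, is establishing the uniform-in-$(x,x')$ bound $\int_\R \Phi_t(x,x')\,\dd t \lesssim |x-x'|^{-\alpha}$, i.e.\ proving the anti-concentration property of the probe function together with the $L^2$ regularity of the sliced measures; the Hölder upper bound and the abstract Frostman/Fubini packaging are comparatively routine.
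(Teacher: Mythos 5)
Your proposed deterministic proof of part \cref{it:main1} does not work: a constant function (or any $\alpha$-Hölder function constant on a fat Cantor set of Minkowski dimension $1$) is $\alpha$-Hölder for every $\alpha$ and has a level set of upper Minkowski dimension $1$, so no oscillation-budget argument can give $\udimm(g^{-1}(\{y\})) \le 1-\alpha$ for every Hölder $g$. In the paper this bound genuinely uses prevalence: via \cref{thm:dim-lemma} it is reduced to showing that the pushforward of Lebesgue measure by the perturbed function is absolutely continuous with bounded density for almost every probe parameter, and the Fourier-analytic proof of this (\cref{thm:main-tech}, using \cref{lemma:sobolev-dim}) is the \emph{only} place where $\alpha<\tfrac12$ is needed. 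You have the role of this hypothesis inverted: the paper explicitly remarks that part \cref{it:main2} holds for every $0<\alpha<1$, whereas the $L^2$ Fourier argument you attribute to part \cref{it:main2} is in fact what drives part \cref{it:main1}.

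More fundamentally, a one-dimensional probe space spanned by a single $\alpha$-Hölder $\fii$ cannot deliver the reverse-Hölder lower bound you identify as the crux. If $\fii\colon[0,1]\to\R$ satisfied $|\fii(x)-\fii(x')|\gtrsim|x-x'|^\alpha$ for all $x,x'$ with $\alpha<1$, then $\fii$ would be a homeomorphism onto an interval whose inverse is $\tfrac1\alpha$-Hölder with $\tfrac1\alpha>1$, forcing the inverse, hence $\fii$, to be constant — a contradiction. This is exactly why the paper invokes Assouad's embedding theorem (\cref{lemma:phi-existence}) to get an $\alpha$-bi-Hölder map $\Phi\colon[0,1]\to\R^d$ and works with a $d$-dimensional probe. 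For the Weierstrass-type Conjecture~\ref{conj:main}, you would additionally need the coordinates of $\Phi$ to lie in $\WW^{\alpha,b}$ so that the probe stays inside the subspace; this is precisely \cref{ques:bi-holder}, which the paper explicitly leaves open, and the reason the statement is posed as a conjecture rather than a theorem. Your instinct that the anti-concentration/reverse-Hölder property is the obstacle is correct and matches the paper's discussion, but a single lacunary series (or a single fBm sample) cannot supply it, and the Kahane-style average over random coefficients does not yield the deterministic, all-pairs lower bound that the Fubini computation in the proof of \cref{thm:graph-slices-big} actually requires.
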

The methods of this paper provide an outline for the proof but unfortunately our efforts in proving a key technical result fall short. We discuss how to adapt the methods of this paper for the setting of Weierstrass-type functions and where the argument fails in \cref{sec:discussion}. We also formulate an explicit open question, \cref{ques:bi-holder}, a positive answer to which implies \cref{conj:main}.

\section{Preliminaries}
In this paper, a measure refers to a finite Borel measure with compact support on $\R^d$, unless stated otherwise. The pushforward of $\mu$ is denoted by $f_*\mu$ whenever $f$ is measurable.

For a bounded set $A \subset \R^d$, we denote the Hausdorff dimension of $A$ by $\dimh(A)$ and the upper Minkowski dimension of $A$ by $\udimm(A)$. For the upper Minkowski dimension, we use the definition via packings, that is,
\begin{equation*}
  \udimm(A)=\limsup_{r\downarrow0}\frac{\log N_r(A)}{-\log r},
\end{equation*}
where $N_r(A)$ is the maximal number of disjoint $r$-balls with centres in $A$. For a measure $\mu$, we recall that the \emph{lower pointwise dimension of $\mu$ at $x$} is
\begin{equation*}
  \ldimloc(\mu,x) = \liminf_{r \downarrow 0} \frac{\log\mu(B(x,r))}{\log r}.
\end{equation*}
We assume that the reader is familiar with these concepts, and refer to \cite{MR1449135} for a basic introduction.

We say that $\Phi \colon [0,1] \to \R^d$ is an \emph{$\alpha$-bi-H\"older map} if there are constants $c_1,c_2>0$ such that
\begin{equation*}
  c_1 \le \frac{\|\Phi(x)-\Phi(y)\|}{|x-y|^\alpha} \le c_2
\end{equation*}
for all $x,y \in [0,1]$ with $x \ne y$. The choice of the norm in the equation above does not play a role, so for convenience we use the maximum norm. The following lemma is a direct consequence of the Assouad embedding theorem \cite{Assouad1983}, which is a celebrated result in the embedding theory of metric spaces---see the book of Heinonen \cite{Heinonen2001} for a proof.

\begin{lemma} \label{lemma:phi-existence}
  For any $0 < \alpha < 1$, there exist $d\in\N$ and an $\alpha$-bi-Hölder map $\Phi \colon [0,1] \to \R^d$.
\end{lemma}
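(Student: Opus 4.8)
The plan is to deduce the lemma directly from the Assouad embedding theorem. Recall that a metric space is \emph{doubling} if there is $N \in \N$ such that every ball of radius $r$ can be covered by at most $N$ balls of radius $r/2$, and that Assouad's theorem asserts the following: if $(X,d_X)$ is doubling, then for every $0<\alpha<1$ the snowflaked space $(X,d_X^\alpha)$ admits a bi-Lipschitz embedding into some Euclidean space $\R^d$, where $d$ depends only on $\alpha$ and the doubling constant $N$.

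First I would verify the single hypothesis, namely that $[0,1]$ equipped with the Euclidean metric is doubling: a ball of radius $r$ is an interval of length at most $2r$, which is covered by two intervals of radius $r/2$, so $N=2$ works. Applying Assouad's theorem to $X=[0,1]$ with the Euclidean metric and the given exponent $\alpha$ then produces $d \in \N$, a map $\Phi \colon [0,1] \to \R^d$, and constants $c_1,c_2>0$ such that
\[
  c_1 |x-y|^\alpha \le \|\Phi(x)-\Phi(y)\| \le c_2 |x-y|^\alpha
  \qquad \text{for all } x,y \in [0,1],
\]
because a bi-Lipschitz embedding of the snowflake $\bigl([0,1],\,|x-y|^\alpha\bigr)$ is precisely a map whose distortion relative to $|x-y|^\alpha$ is bounded above and below. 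Dividing by $|x-y|^\alpha$ gives exactly the definition of an $\alpha$-bi-Hölder map, and replacing the Euclidean norm on $\R^d$ by the maximum norm only changes $c_1$ and $c_2$ by dimensional constants.

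There is no real obstacle to overcome here: all of the content is hidden inside the Assouad embedding theorem, whose proof (a Whitney-type decomposition of $X$ together with a combinatorial colouring argument that keeps the target dimension finite) is given in Heinonen's book \cite{Heinonen2001}. It is worth noting, however, that one cannot expect a uniform bound on $d$: as $\alpha \to 1$ the required dimension grows, reflecting the fact that a genuinely $\alpha$-bi-Hölder curve behaves like a set of dimension $1/\alpha$ and cannot be housed in a fixed $\R^d$. For particular values of $\alpha$ one could instead exhibit an explicit self-similar von Koch-type curve with the desired distortion, but the general statement is most cleanly obtained from the black-box application above.
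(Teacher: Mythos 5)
Your proposal is correct and is essentially identical to the paper's treatment: the paper likewise derives the lemma as an immediate corollary of Assouad's embedding theorem applied to the snowflaked interval, citing Heinonen's book for the proof. Your added verification that $[0,1]$ is doubling and your unpacking of why a bi-Lipschitz embedding of $([0,1],|x-y|^{\alpha})$ is the same thing as an $\alpha$-bi-H\"older map are accurate but just make explicit what the paper leaves implicit.
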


We are interested in the size of level sets of $\alpha$-Hölder functions. These appear as horizontal slices of the graph of the function or equivalently as fibers of the projection of the graph onto the $y$-axis. We will also consider slices and projections in other directions so for $\theta \in [0,2\pi]$ we let $\proj_\theta \colon \R^2 \to \R$ be the orthogonal projection defined by
\begin{equation*}
  \proj_\theta(x,y) = x\cos(\theta)+y\sin(\theta)
\end{equation*}
for all $(x,y) \in \R^2$.

\subsection{Potential theory and Fourier analysis}\label{sec:potential-fourier}
In the proof of Theorem \ref{thm:main}, we will employ two powerful toolsets from geometric measure theory: Fourier analysis and potential theory. Let us briefly recall the required definitions.

Given a measure $\mu$ on $\R^d$ and $t>0$, we define the \emph{$t$-energy of $\mu$} by
\begin{equation*}
  I_t(\mu)=\iint \|x-y\|^{-t}\dd\mu(x)\dd\mu(y).
\end{equation*}
The following is a basic lemma connecting the energies of measures to Hausdorff dimensions of their supports. For the proof, see i.e. \cite[Theorem 4.13]{MR1102677}.
\begin{lemma}\label{lemma:energy-dim}
  Let $F\subset \R^d$. If there is a measure $\mu$ supported on $F$ satisfying $I_s(\mu)<\infty$, then $\dimh(F)\geq s$.
\end{lemma}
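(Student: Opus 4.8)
The final statement to prove is \cref{lemma:energy-dim}: if a measure $\mu$ supported on $F \subset \R^d$ has $I_s(\mu) < \infty$, then $\dimh(F) \ge s$. Here is how I would approach it.

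\medskip

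The plan is to use the standard mass distribution / Frostman-type argument, exploiting the hypothesis that the $s$-energy is finite to control the $\mu$-measure of small balls. First I would observe that finiteness of $I_s(\mu) = \iint \|x-y\|^{-s} \dd\mu(x) \dd\mu(y)$ forces the inner integral $U_s^\mu(x) := \int \|x-y\|^{-s} \dd\mu(y)$ (the $s$-potential) to be finite for $\mu$-almost every $x$. Fix such a point $x$. The key step is to bound $\mu(B(x,r))$ from above: for $y \in B(x,r)$ we have $\|x-y\|^{-s} \ge r^{-s}$, hence
\begin{equation*}
  U_s^\mu(x) \ge \int_{B(x,r)} \|x-y\|^{-s} \dd\mu(y) \ge r^{-s}\mu(B(x,r)),
\end{equation*}
so $\mu(B(x,r)) \le r^s U_s^\mu(x)$ for all $r > 0$. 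This already shows $\ldimloc(\mu, x) \ge s$ at $\mu$-a.e.\ $x$, but to get the cleaner bound I would instead use a slightly sharper splitting: decompose $B(x,r)$ into dyadic annuli $B(x, 2^{-k}r) \setminus B(x, 2^{-k-1}r)$ and sum, which shows $\mu(B(x,r)) = o(r^s)$ as $r \downarrow 0$, i.e.\ $\lim_{r\downarrow 0} \mu(B(x,r))/r^s = 0$ for $\mu$-a.e.\ $x$. The annular estimate is needed because the tail of a convergent integral vanishes.

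\medskip

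Next I would pass from the pointwise bound to a dimension bound via the mass distribution principle. Since $\mu(B(x,r))/r^s \to 0$ for $\mu$-a.e.\ $x$, for any $t < s$ we certainly have $\limsup_{r \downarrow 0} \mu(B(x,r))/r^t = 0$ at $\mu$-a.e.\ $x$; restricting $\mu$ to a subset $F_0 \subset F$ of positive measure on which this convergence is \emph{uniform} (Egorov), we obtain a measure $\nu = \mu|_{F_0}$ and a function $\rho(r) \to 0$ with $\nu(B(x,r)) \le \rho(r) r^t$ for all $x$ and all small $r$. The mass distribution principle (e.g.\ \cite[Principle 4.2]{MR1102677} or \cite{MR1449135}) then gives $\HH^t(F_0) \ge \nu(F_0)/\sup_r \rho(r) > 0$ after a covering argument, whence $\dimh(F) \ge \dimh(F_0) \ge t$. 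Letting $t \uparrow s$ yields $\dimh(F) \ge s$.

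\medskip

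I do not expect any genuine obstacle here — this is a textbook result and the reference \cite[Theorem 4.13]{MR1102677} is cited precisely for this purpose, so in the paper it would simply be invoked. If a self-contained argument were wanted, the only mildly delicate point is the passage from "$\mu(B(x,r)) \lesssim r^s$ for $\mu$-a.e.\ $x$" to a genuine lower bound on $\dimh F$: one must localise to a positive-measure subset with uniform control (via Egorov) before applying the mass distribution principle, since the implied constant in the ball estimate depends on $x$. With that caveat handled, the proof is routine.
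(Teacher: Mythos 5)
The paper does not prove this lemma; it simply cites \cite[Theorem 4.13]{MR1102677}, which is precisely the Frostman/energy argument you sketch. Your outline is a correct rendition of that standard proof: finiteness of the potential $U_s^\mu(x)$ at $\mu$-a.e.\ $x$, the tail estimate yielding $\mu(B(x,r)) = o(r^s)$, and then Egorov together with the mass distribution principle to pass from a pointwise, $x$-dependent bound to a genuine lower bound on Hausdorff dimension. One small simplification worth noting: the detour through exponents $t < s$ is unnecessary. Since $\mu(B(x,r))/r^s \to 0$ for $\mu$-a.e.\ $x$, you can apply Egorov and the mass distribution principle directly at exponent $s$ (the uniform bound $\nu(B(x,r)) \le \rho(r) r^s$ with $\rho(r) \to 0$ already suffices), concluding $\mathcal{H}^s(F) > 0$ and hence $\dimh(F) \ge s$ without a limiting step. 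Also, the dyadic annulus decomposition is optional --- once one notes that $\mu(\{x\}) = 0$ whenever $U_s^\mu(x) < \infty$, dominated convergence gives $\int_{B(x,r)}\|x-y\|^{-s}\dd\mu(y) \to 0$ directly, and this dominates $r^{-s}\mu(B(x,r))$.
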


The Fourier transform of a finite Borel measure $\mu$ on $\R$ is defined by
\begin{equation*}
  \hat\mu(\xi) = \int_{\R} e^{i\xi x} \dd\mu(x)
\end{equation*}
for all $\xi \in \R$ and the Fourier transform of an integrable function $\psi \colon \R^d \to \R$ by
\begin{equation*}
  \hat\psi(\zeta) = \int_{\R^d} \psi(\ttt) e^{i\langle \ttt,\zeta \rangle} \dd\ttt
\end{equation*}
for all $\zeta \in \R^d$, where $\langle \,\cdot\,,\cdot\, \rangle$ denotes the standard inner product on $\R^d$. We recall the following lemma, which easily follows from the Riemann-Lebesgue lemma and integration by parts. Here and hereafter $C^{\infty}_0(\R^d)$ denotes the space of smooth functions with compact support on $\R^d$.

\begin{lemma}\label{lemma:riemann-lebesgue}
  For any $\psi\in C^{\infty}_0(\R^d)$ and $s>0$, there is a constant $C=C(\psi,s)>0$, such that
  \begin{equation*}
    |\hat\psi(\zeta)|\leq C(1+\|\zeta\|)^{-s}
  \end{equation*}
  for all $\zeta\in\R^d$.
\end{lemma}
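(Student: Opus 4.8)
The plan is to exploit the standard fact that differentiation is intertwined with multiplication by a polynomial under the Fourier transform: since $\psi\in C^{\infty}_0(\R^d)$, every partial derivative of $\psi$ is continuous and supported in the same compact set, hence integrable, and it is precisely this integrability of high-order derivatives that forces rapid decay of $\hat\psi$. The only additional input is the trivial estimate $|\hat\psi(\zeta)|\le\|\psi\|_{L^1}$, valid because $|e^{i\langle\ttt,\zeta\rangle}|=1$.

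First I would integrate by parts. Since $\partial_{t_j}e^{i\langle\ttt,\zeta\rangle}=i\zeta_j e^{i\langle\ttt,\zeta\rangle}$ and $\psi$ vanishes outside a compact set, the boundary terms disappear and one obtains $\widehat{\partial_{t_j}\psi}(\zeta)=-i\zeta_j\hat\psi(\zeta)$. Applying this to the Laplacian and iterating $N$ times yields the identity $\widehat{(\id-\Delta)^N\psi}(\zeta)=(1+\|\zeta\|^2)^N\hat\psi(\zeta)$, where $(\id-\Delta)^N\psi$ is again a function in $C^{\infty}_0(\R^d)$, in particular in $L^1(\R^d)$. Using the trivial bound on the left-hand side then gives
\[
  |\hat\psi(\zeta)|\le (1+\|\zeta\|^2)^{-N}\,\bigl\|(\id-\Delta)^N\psi\bigr\|_{L^1}.
\]

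Finally, to convert decay in $(1+\|\zeta\|^2)$ into the stated decay in $(1+\|\zeta\|)$, I would use the elementary inequality $1+\|\zeta\|\le 2(1+\|\zeta\|^2)$ (which holds for all $\zeta$, since $\|\zeta\|^2-\|\zeta\|+1>0$), so that $(1+\|\zeta\|^2)^{-N}\le 2^N(1+\|\zeta\|)^{-N}$ and the claim follows with $C=2^N\|(\id-\Delta)^N\psi\|_{L^1}$, a constant depending only on $\psi$ and $N$. (Alternatively, one can avoid the Laplacian: integrating by parts $N$ times in a coordinate direction $j$ with $|\zeta_j|=\max_k|\zeta_k|\ge\|\zeta\|/\sqrt d$ gives $|\hat\psi(\zeta)|\le(\sqrt d/\|\zeta\|)^N\max_k\|\partial_{t_k}^N\psi\|_{L^1}$ for $\zeta\ne 0$, and one then splits into the regions $\|\zeta\|\le 1$ and $\|\zeta\|\ge 1$, using the trivial $L^1$ bound on the former.) There is no genuine obstacle here: the statement is classical, amounting to the assertion that the Fourier transform maps $C^{\infty}_0(\R^d)$ into the Schwartz class, and the only point requiring a little care is the passage from $\|\zeta\|^{-N}$ decay to $(1+\|\zeta\|)^{-N}$ decay, i.e. the behaviour at low frequencies. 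Note in particular that the Riemann–Lebesgue lemma mentioned before the statement is not actually needed for this quantitative bound, being subsumed by the elementary estimate $|\hat\psi|\le\|\psi\|_{L^1}$.
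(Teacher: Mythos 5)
Your proof is correct and complete, and it follows the route the paper merely gestures at (``Riemann--Lebesgue lemma and integration by parts'') without actually writing out: the paper states this as a recalled fact with no proof. Your derivation via $\widehat{(\id-\Delta)^N\psi}(\zeta)=(1+\|\zeta\|^2)^N\hat\psi(\zeta)$ combined with the trivial $L^1$ bound is the standard argument, the elementary inequality $1+\|\zeta\|\le 2(1+\|\zeta\|^2)$ handles low frequencies cleanly, and the alternative coordinate-by-coordinate version you sketch is also sound. Your side remark is fair: the Riemann--Lebesgue lemma per se (a qualitative decay statement) is not needed once one has the quantitative $|\hat\psi|\le\|\psi\|_{L^1}$ bound applied to $(\id-\Delta)^N\psi$; the paper's attribution to Riemann--Lebesgue is a loose colloquialism for this circle of ideas rather than a logically necessary ingredient.
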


There are intimate connections between the regularity of a measure and the decay rate of its Fourier transform. The book by Mattila \cite{Mattila2015} provides a thorough introduction to the use of Fourier analytic methods in geometric measure theory and fractal geometry. The following lemma connecting the average decay rate of the Fourier transform to the regularity of the measure will be integral in the proof of \cref{thm:main}\eqref{it:main1}. The proof of the lemma can be found for example in \cite[Theorem 5.4]{Mattila2015}.

\begin{lemma}\label{lemma:sobolev-dim}
  Let $\mu$ be a finite Borel measure with compact support on $\R$. If
  \begin{equation*}
    \int_{\R} |\xi|^\beta |\hat\mu(\xi)|^2 \dd\xi < \infty
  \end{equation*}
  for some $\beta>1$, then $\mu$ is absolutely continuous with bounded continuous density.
\end{lemma}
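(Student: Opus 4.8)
The plan is to read the hypothesis as the statement that $\mu$ lies in the $L^2$-based Sobolev space $H^{\beta/2}(\R)$ with smoothness exponent $\beta/2>\tfrac12$, and to run the one-dimensional Sobolev embedding $H^s\hookrightarrow C^0$ valid for $s>\tfrac12$. The whole argument reduces to showing that $\hat\mu\in L^1(\R)$: once this is established, Fourier inversion produces a bounded continuous function which one identifies with the density of $\mu$. To prove $\hat\mu\in L^1$, first note that $\mu$ finite gives $|\hat\mu(\xi)|\le\mu(\R)$ for every $\xi$, so the hypothesis upgrades to $\int(1+|\xi|)^\beta|\hat\mu(\xi)|^2\dd\xi<\infty$ (the neighbourhood of the origin contributes a bounded amount since $\beta>0$). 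Splitting $\R$ into $\{|\xi|\le1\}$ and $\{|\xi|>1\}$ and applying Cauchy--Schwarz on the outer region,
\begin{equation*}
  \int_{|\xi|>1}|\hat\mu(\xi)|\dd\xi\le\Bigl(\int_{|\xi|>1}|\xi|^\beta|\hat\mu(\xi)|^2\dd\xi\Bigr)^{1/2}\Bigl(\int_{|\xi|>1}|\xi|^{-\beta}\dd\xi\Bigr)^{1/2},
\end{equation*}
where the second factor is finite exactly because $\beta>1$; together with the trivial bound $\int_{|\xi|\le1}|\hat\mu(\xi)|\dd\xi\le2\mu(\R)$ this gives $\hat\mu\in L^1(\R)$.

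Granting $\hat\mu\in L^1(\R)$, I would set $g(x)=\tfrac1{2\pi}\int\hat\mu(\xi)e^{-i\xi x}\dd\xi$; dominated convergence makes $g$ continuous, and $\|g\|_\infty\le\tfrac1{2\pi}\|\hat\mu\|_{L^1}$, so it remains only to check that $g$ is a density for $\mu$. This I would establish by mollification: if $(\phi_\eps)_{\eps>0}$ is a standard approximate identity, then $\mu*\phi_\eps$ has a smooth density $h_\eps$ with $\hat h_\eps=\hat\mu\,\hat\phi_\eps$, and since $|\hat\phi_\eps|\le1$ and $\hat\phi_\eps\to1$ pointwise, dominated convergence (dominating function $|\hat\mu|$) gives $\hat h_\eps\to\hat\mu$ in $L^1$, hence $h_\eps\to g$ uniformly by Fourier inversion. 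On the other hand $\mu*\phi_\eps\to\mu$ weakly, so pairing with functions in $C_0(\R)$ forces $\mu=g\,\LL^1$. Finally, since $\mu$ is a nonnegative finite measure with compact support, $g\ge0$ and $g$ vanishes off $\spt\mu$, so $g$ is in addition integrable, which completes the proof.

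I expect the only mildly delicate point to be the identification of $g$ with the density of $\mu$ — that is, justifying that the pointwise-defined inverse transform genuinely represents $\mu$ rather than merely a tempered distribution with the same Fourier transform — but this is a routine approximation argument, and one may alternatively quote the Fourier inversion theorem for finite measures with integrable Fourier transform. I would also observe, although it is not needed for the statement, that applying the same Cauchy--Schwarz estimate to $\int_{|\xi|>\lambda}|\hat\mu(\xi)|\dd\xi$ yields quantitative decay in $\lambda$, from which one reads off that the density $g$ is in fact Hölder continuous of every exponent strictly below $\min\{1,(\beta-1)/2\}$.
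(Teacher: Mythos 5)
Your proof is correct. The paper does not actually give an argument for this lemma — it is stated as a citation to Mattila, \cite[Theorem~5.4]{Mattila1995} — so there is no in-text proof to compare against. Your route (Cauchy--Schwarz to show $\hat\mu\in L^1$ using $\beta>1$, then Fourier inversion to produce the bounded continuous density and a mollification argument to identify it with $\mu$) is the standard one, equivalent to reading the hypothesis as $\mu\in H^{\beta/2}$ with $\beta/2>\tfrac12$ and invoking the Sobolev embedding into $C^0$. Two minor remarks: the compact-support hypothesis is not actually needed anywhere in your argument — finiteness of $\mu$ alone gives $|\hat\mu|\le\mu(\R)$, which is all you use, and the integrability of $g$ already follows from $\int g\dd\LL^1=\mu(\R)<\infty$ rather than from compact support; and the upgrade to $(1+|\xi|)^\beta$ is harmless but redundant given that you then split off $\{|\xi|\le1\}$ anyway. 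Your concluding Hölder-continuity observation is also correct and is exactly what the quantitative tail estimate $\int_{|\xi|>\lambda}|\hat\mu|\dd\xi\lesssim\lambda^{(1-\beta)/2}$ yields.
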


\subsection{Measurability}\label{sec:technical}
For completeness, let us provide the technical details on the measurability of the sets in \cref{thm:main}. Using standard techniques from i.e.\ \cite{Kechris1995} one is able to show that the set in \cref{thm:main}\cref{it:main2} is a Borel set. However, it turns out that the set in \cref{thm:main}\cref{it:main1} is merely co-analytic, which is why we need the following technical result from \cite{Solecki1996}.

\begin{theorem}[Solecki]\label{thm:solecki}
  Let $G$ be an abelian Polish group and let $A\subset G$ be a co-analytic set. If there exists a Borel probability measure $\mu$ on $G$ such that for any $x\in X$ we have $x+s\in A$ for $\mu$-almost all $s\in\spt(\mu)$, then $A$ is prevalent.
\end{theorem}

This theorem together with the following lemma justifies that it is enough to construct probe spaces for the sets in \cref{thm:main}.

\begin{lemma}\label{lemma:technical}
  For any $0<\alpha<1$, the set
  \begin{equation*}
    \mathcal{F}_1 = \{f\in\CC^{\alpha}([0,1])\colon \udimm(f^{-1}(\{y\}))\leq 1-\alpha\text{ for all }y\in\R\}
  \end{equation*}
  is co-analytic and the set
  \begin{equation*}
    \mathcal{F}_2 = \{f\in\CC^{\alpha}([0,1])\colon \LL^1(\{y\in f([0,1])\colon \dimh(f^{-1}(\{y\}))=1-\alpha\})>0\},
  \end{equation*}
  is Borel.
\end{lemma}
\begin{proof}
  Let $\mathcal{K}([0,1])$ denote the space of compact subsets of $[0,1]$ equipped with the Hausdorff distance and consider the function $\Delta_{\mathrm{M}}\colon \mathcal{K}([0,1])\to\R$ defined by
  \begin{equation*}
    \Delta_{\mathrm{M}}(K)=\udimm(K).
  \end{equation*}
  It was shown in \cite{MattilaMauldin1997} that $\Delta_{\mathrm{M}}$ is a Borel function. Consider also the function $S\colon \CC^\alpha([0,1])\times \R\to \mathcal{K}([0,1])$ defined by
  \begin{equation*}
    S(f,y)=f^{-1}(\{y\}),
  \end{equation*}
  which is easily seen to be continuous. Therefore, the set
  \begin{equation*}
    \Delta = \{(f,y)\in\CC^\alpha([0,1])\times \R\colon \udimm(f^{-1}(\{y\}))> 1-\alpha\}=(\Delta_{\mathrm{M}}\circ S)^{-1}((1-\alpha,\infty))
  \end{equation*}
  is Borel. Recalling that the projection $\Pi\colon \CC^\alpha([0,1])\times \R\to \CC^\alpha([0,1])$ defined by $\Pi(f,y)=f$ is continuous, we see that the set
  \begin{equation*}
    \mathcal{F}_1=\CC^\alpha([0,1])\setminus\Pi(\Delta),
  \end{equation*}
  is co-analytic.

  For $\mathcal{F}_2$, consider the function $\Delta_{\mathrm{H}}\colon \mathcal{K}([0,1])\to\R$ defined by
  \begin{equation*}
    \Delta_{\mathrm{H}}(K)=\dimh(K),
  \end{equation*}
  which was also shown to be Borel in \cite{MattilaMauldin1997}. Define for each $f\in\CC^\alpha([0,1])$ the function $S_f\colon \R\to\mathcal{K}([0,1])$ by $S_f(y)=S(f,y)$, and note that the set
  \begin{equation*}
    B_f = \{y\in\R\colon \dimh(f^{-1}(\{y\}))=1-\alpha\}=(\Delta_{\mathrm{H}}\circ S_f)^{-1}(\{1-\alpha\})
  \end{equation*}
  is Borel for all $f\in\CC^\alpha([0,1])$. Moreover, it follows from \cite[Exercise 17.36]{Kechris1995} that the map $L\colon \CC^\alpha([0,1])\to \R$ defined by
  \begin{equation*}
    L(f)=\LL(B_f)
  \end{equation*}
  is a Borel function and therefore, the set $\mathcal{F}_2=L^{-1}((0,\infty])$ is Borel.
\end{proof}

\section{Proof of the main result} \label{sec:proof}
In this section we prove our main result, \cref{thm:main}. To simplify the exposition, we fix some notation for the remainder of this section. Let us fix $0<\alpha<1$ and an $\alpha$-bi-Hölder map $\Phi\colon [0,1]\to\R^d$ given by \cref{lemma:phi-existence}. For a given $f\in C^{\alpha}([0,1])$ and $\ttt\in\R^d$ we define the function $f_{\ttt}\colon [0,1]\to\R$ by
\begin{equation*}
  f_{\ttt}(x)=f(x)+\langle\ttt, \Phi(x)\rangle.
\end{equation*}
It is evident by the triangle inequality that $f_{\ttt}\in C^{\alpha}([0,1])$. Moreover, the space
\begin{equation*}
  \SS = \{x\mapsto \langle\ttt, \Phi(x)\rangle\colon \ttt\in \R^d\}
\end{equation*}
is a finite dimensional subspace of $\CC^{\alpha}([0,1])$. The space $\SS$ will play the role of the probe space in our results: to show that a property holds for prevalent functions, we show that for any $f\in\CC^{\alpha}([0,1])$, the property holds for the function $f_{\ttt}$ for $\LL^d$-almost every $\ttt\in\R^d$.

The proofs in this section rely on the potential theoretic and Fourier analytic methods which we recalled in \cref{sec:potential-fourier}. We will apply these tools to the natural measures $\mu_\ttt$ supported on the graphs of the functions $f_{\ttt}$, which we define by
\begin{equation*}
  \mu_{\ttt}=(\id,f_{\textbf{t}})_*\LL^1.
\end{equation*}
In other words, $\mu_{\ttt}$ is the lift of the Lebesgue measure from the unit interval onto the graph of $f_{\ttt}$. Note that the measure $\mu_{0}$ is just the lift of the Lebesgue measure onto the graph of $f$, and for convenience, we let $\mu=\mu_{0}$. The measures $\mu_{\ttt}$ of course depend on $f$ and $\Phi$ in addition to $\ttt$, but in our applications $f$ and $\Phi$ are fixed and should be clear from the context whereas $\ttt$ varies, which justifies our notation. Finally, for $\theta\in[0,2\pi]$ and a measure $\nu$ on $\R^2$, we let
\begin{equation*}
  \nu^\theta=(\proj_\theta)_*\nu.
\end{equation*}
Note that the projection of $\mu$ onto the $y$-axis is $\mu^{\frac{\pi}{2}}=(\proj_{\frac{\pi}{2}})_*\mu=f_*\LL^1$.

\subsection{Proof of Theorem \ref*{thm:main}(\ref*{it:main1})}
We will start the proof of the first claim of \cref{thm:main} by observing a connection between the Minkowski dimension of the level sets of $f$ and the pointwise dimensions of the projected measure $\mu^{\frac{\pi}{2}}$.
\begin{lemma} \label{thm:dim-lemma}
  Suppose that $0 < \alpha < 1$ and $f \in C^\alpha([0,1])$. Then we have
  \begin{equation*}
    \udimm(f^{-1}(\{y\})) \le 1 - \alpha \ldimloc(\mu^{\frac{\pi}{2}},y)
  \end{equation*}
  for all $y \in f([0,1])$.
\end{lemma}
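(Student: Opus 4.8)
The plan is to estimate the covering number of a level set $f^{-1}(\{y\})$ by counting how many dyadic subintervals of $[0,1]$ can map into a small neighbourhood of $y$ under $f$, and then to convert this count into a statement about the mass that $\mu^{\frac\pi2}=f_*\LL^1$ places near $y$. First I would fix $y\in f([0,1])$, set $s=\ldimloc(\mu^{\frac\pi2},y)$, and take any $t<s$; by definition of the lower pointwise dimension there is $r_0>0$ such that $\mu^{\frac\pi2}(B(y,r))=\LL^1(f^{-1}(B(y,r)))\le r^{t}$ for all $0<r<r_0$. The goal is then to show $\udimm(f^{-1}(\{y\}))\le 1-\alpha t$; letting $t\uparrow s$ finishes the proof.

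The core geometric step uses Hölder continuity in the direction opposite to the usual graph estimate. If $C$ is the Hölder constant of $f$ and $x\in f^{-1}(\{y\})$, then for any $x'$ with $|x-x'|\le\rho$ we have $|f(x')-y|=|f(x')-f(x)|\le C\rho^{\alpha}$, so the $\rho$-neighbourhood of $f^{-1}(\{y\})$ in $[0,1]$ is contained in $f^{-1}(B(y,C\rho^{\alpha}))$. Now I would cover $f^{-1}(\{y\})$ by the maximal collection of disjoint intervals of length $\rho$ centred in $f^{-1}(\{y\})$; these $N_\rho$ intervals are pairwise disjoint, each has length $\rho$, and each is contained in $f^{-1}(B(y,C\rho^{\alpha}))$. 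Hence
\begin{equation*}
  N_\rho(f^{-1}(\{y\}))\cdot\rho \le \LL^1\bigl(f^{-1}(B(y,C\rho^{\alpha}))\bigr) = \mu^{\frac\pi2}\bigl(B(y,C\rho^{\alpha})\bigr).
\end{equation*}
For $\rho$ small enough that $C\rho^{\alpha}<r_0$, the right-hand side is at most $(C\rho^{\alpha})^{t}=C^{t}\rho^{\alpha t}$, so $N_\rho(f^{-1}(\{y\}))\le C^{t}\rho^{\alpha t-1}$. Taking logarithms,
\begin{equation*}
  \frac{\log N_\rho(f^{-1}(\{y\}))}{-\log\rho}\le \frac{t\log C}{-\log\rho}+1-\alpha t,
\end{equation*}
and letting $\rho\downarrow0$ gives $\udimm(f^{-1}(\{y\}))\le 1-\alpha t$.

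I do not expect a serious obstacle here; this is a short argument. The only minor points to be careful about are: the bound must use the \emph{lower} pointwise dimension (since it is a $\liminf$, the mass estimate $\mu^{\frac\pi2}(B(y,r))\le r^{t}$ only holds along a subsequence of radii, but that is exactly matched by the $\limsup$ in the definition of $\udimm$, so passing $t\uparrow s$ and $\rho\downarrow0$ through the same subsequence is harmless — in fact one should phrase it as: for every $t<s$ the inequality $\mu^{\frac\pi2}(B(y,r))\le r^{t}$ holds for all sufficiently small $r$, which is the correct reading of $\liminf_{r}\log\mu^{\frac\pi2}(B(y,r))/\log r\ge t$); keeping track of which balls are in $\R$ versus intervals in $[0,1]$ (they coincide up to the boundary, which does not affect covering numbers); and handling the degenerate case $s=0$, where the claimed bound $1-\alpha s=1$ is trivial. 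If $y$ is an endpoint of $f([0,1])$ or $f^{-1}(\{y\})$ touches $\partial[0,1]$, one works with half-open intervals, which changes nothing in the limit.
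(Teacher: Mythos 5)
Your proof is correct and follows essentially the same approach as the paper: a maximal $\rho$-packing of the level set pushes forward, via the Hölder bound, into $B(y,C\rho^{\alpha})$, giving $N_\rho\cdot\rho\lesssim\mu^{\frac{\pi}{2}}(B(y,C\rho^{\alpha}))$, after which one takes logarithms and limits. The only cosmetic difference is that you introduce an auxiliary exponent $t<\ldimloc(\mu^{\frac{\pi}{2}},y)$ and let $t\uparrow$, whereas the paper takes the $\liminf$ of the logarithmic inequality directly.
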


\begin{proof}
  Let $B(x,r)$ be the open ball centered at $x$ with radius $r$. Let $\{B(x_i,r)\}_{i=1}^{N_r}$ be a maximal $r$-packing of $f^{-1}(\{y\})$. Note that $y = f(x_i)$ and $B(x_i,r) \cap B(x_j,r) = \emptyset$ whenever $i \ne j$. Since $|f(x_i)-f(z)| \le c|x_i-z|^\alpha \le cr^\alpha$ for all $z \in B(x_i,r)$, we have $\mu^{\frac{\pi}{2}}(B(y,cr^\alpha)) = \LL^1(f^{-1}(B(y,cr^\alpha))) \ge \sum_{i=1}^{N_r}\LL^1(B(x_i,r))= 2N_rr$ and therefore
  \begin{align*}
    \frac{\log\mu^{\frac{\pi}{2}}(B(y,cr^\alpha))}{\log r} \le \frac{\log 2N_r}{\log r} + 1.
  \end{align*}
  Taking the limit inferior yields $\alpha \ldimloc(\mu^{\frac{\pi}{2}},y) \le -\udimm(f^{-1}(\{y\})) + 1$ as claimed.
\end{proof}

The following proposition together with \cref{lemma:technical} and \cref{thm:solecki} imply \cref{thm:main}\cref{it:main1}.

\begin{proposition} \label{thm:main-tech}
  Suppose that $0 < \alpha < \frac12$ and $f \in C^\alpha([0,1])$. Then for $\LL^d$-almost all $\ttt \in \R^d$ the function $f_{\ttt} \colon [0,1] \to \R$ satisfies
  \begin{equation*}
    \udimm(f_{\ttt}^{-1}(\{y\})) \le 1-\alpha
  \end{equation*}
  for all $y \in \R$.
\end{proposition}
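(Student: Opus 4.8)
By Lemma \ref{thm:dim-lemma}, it suffices to show that for $\LL^d$-almost all $\ttt\in\R^d$ we have $\ldimloc(\mu_\ttt^{\pi/2},y)\ge 1$ for every $y\in\R$, since then $\udimm(f_\ttt^{-1}(\{y\}))\le 1-\alpha\cdot 1=1-\alpha$. Because $\mu_\ttt^{\pi/2}$ is a compactly supported measure on $\R$, the bound $\ldimloc(\mu_\ttt^{\pi/2},y)\ge 1$ for all $y$ would follow from showing that $\mu_\ttt^{\pi/2}$ is absolutely continuous with a bounded density; this is exactly the conclusion of Lemma \ref{lemma:sobolev-dim}, applied with some $\beta>1$. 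So the plan is: prove that for $\LL^d$-a.e.\ $\ttt$,
\begin{equation*}
  \int |\xi|^\beta |\widehat{\mu_\ttt^{\pi/2}}(\xi)|^2 \dd\xi < \infty
\end{equation*}
for a suitable $\beta\in(1,2\alpha/(1-2\alpha))$ or similar (here the constraint $\alpha<\tfrac12$ will enter, since we need $\beta>1$ to be achievable). The standard device is to integrate this quantity over $\ttt$ in a large ball $B_R\subset\R^d$ against $\LL^d$ and show the result is finite; then Fubini gives finiteness for $\LL^d$-a.e.\ $\ttt$, and letting $R\to\infty$ handles all of $\R^d$.

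The key computation is therefore the "transversality" estimate for the exponential sums. Writing $\mu_\ttt^{\pi/2}=(f_\ttt)_*\LL^1$, we have
\begin{equation*}
  \widehat{\mu_\ttt^{\pi/2}}(\xi)=\int_0^1 e^{i\xi f_\ttt(x)}\dd x
  =\int_0^1 e^{i\xi f(x)}e^{i\xi\langle\ttt,\Phi(x)\rangle}\dd x,
\end{equation*}
so that
\begin{equation*}
  |\widehat{\mu_\ttt^{\pi/2}}(\xi)|^2
  =\iint_{[0,1]^2} e^{i\xi(f(x)-f(y))}e^{i\xi\langle\ttt,\Phi(x)-\Phi(y)\rangle}\dd x\dd y.
\end{equation*}
Integrating over $\ttt\in B_R$ and applying Fubini, the inner integral becomes $\int_{B_R}e^{i\langle\xi(\Phi(x)-\Phi(y)),\ttt\rangle}\dd\ttt=\widehat{\mathbf 1_{B_R}}\bigl(\xi(\Phi(x)-\Phi(y))\bigr)$, which by Lemma \ref{lemma:riemann-lebesgue} (or the explicit form of the Fourier transform of a ball) is bounded by $C_{R,N}(1+|\xi|\,\|\Phi(x)-\Phi(y)\|)^{-N}$ for any $N$. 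Now the lower bi-Hölder bound $\|\Phi(x)-\Phi(y)\|\ge c_1|x-y|^\alpha$ turns this into $C(1+|\xi|\,|x-y|^\alpha)^{-N}$. Thus
\begin{equation*}
  \int_{B_R}|\widehat{\mu_\ttt^{\pi/2}}(\xi)|^2\dd\ttt
  \le C\iint_{[0,1]^2}\bigl(1+|\xi|\,|x-y|^\alpha\bigr)^{-N}\dd x\dd y
  \le C'\int_0^1 (1+|\xi|u^\alpha)^{-N}\dd u,
\end{equation*}
and a change of variables $v=|\xi|u^\alpha$ (so $u=(v/|\xi|)^{1/\alpha}$, $\dd u = \tfrac1\alpha |\xi|^{-1/\alpha}v^{1/\alpha-1}\dd v$) shows this integral is $O(|\xi|^{-1/\alpha})$ as $|\xi|\to\infty$, provided $N$ is chosen large enough that $\int_0^\infty v^{1/\alpha-1}(1+v)^{-N}\dd v<\infty$. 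Multiplying by $|\xi|^\beta$ and integrating in $\xi$ over $\R$, the integral $\int |\xi|^{\beta-1/\alpha}\dd\xi$ (over $|\xi|\ge 1$) converges precisely when $\beta-1/\alpha<-1$, i.e.\ $\beta<1/\alpha-1$; and $1/\alpha-1>1$ exactly when $\alpha<\tfrac12$. Choosing such a $\beta\in(1,1/\alpha-1)$, Fubini then yields $\int|\xi|^\beta|\widehat{\mu_\ttt^{\pi/2}}(\xi)|^2\dd\xi<\infty$ for $\LL^d$-a.e.\ $\ttt\in B_R$; taking a countable union over $R\in\N$ gives the conclusion for $\LL^d$-a.e.\ $\ttt\in\R^d$, and Lemma \ref{lemma:sobolev-dim} together with Lemma \ref{thm:dim-lemma} completes the proof.

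The main obstacle is making the transversality step fully rigorous and uniform. Concretely, one must be careful that the constant $C_{R,N}$ and the decay are genuinely uniform over $x,y$ and in particular do not degenerate as $x\to y$ — here the upper bi-Hölder bound $\|\Phi(x)-\Phi(y)\|\le c_2|x-y|^\alpha$ is what prevents the phase from being too large and keeps the Riemann–Lebesgue estimate honest near the diagonal, while the lower bound gives the decay away from it. A second subtlety is the choice of exponent: one needs the window between $1$ and $1/\alpha-1$ to be nonempty, which is exactly where the hypothesis $\alpha<\tfrac12$ is used and cannot be avoided by this method. Everything else — interchanging integrals via Fubini (justified by absolute convergence once $N$ is large), the change of variables, and passing from a fixed ball $B_R$ to all of $\R^d$ — is routine.
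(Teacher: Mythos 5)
Your overall strategy matches the paper's: reduce via Lemma \ref{thm:dim-lemma} to lower-bounding $\ldimloc(\mu_\ttt^{\pi/2},y)$, obtain this from absolute continuity with bounded density via Lemma \ref{lemma:sobolev-dim}, and prove the latter by integrating the weighted Fourier norm $\int|\xi|^\beta|\widehat{\mu_\ttt^{\pi/2}}(\xi)|^2\dd\xi$ over $\ttt$, invoking the lower bi-H\"older bound on $\Phi$ and a rapid-decay bound on the dual Fourier transform to make the triple integral converge. Your computation of the $(x,y)$-integral via the substitution $v=|\xi|u^\alpha$, giving $O(|\xi|^{-1/\alpha})$, is a clean way to obtain the constraint $\beta<1/\alpha-1$, and the threshold $\alpha<\tfrac12$ appears for exactly the same reason as in the paper.

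There is, however, a genuine gap in the transversality step. You integrate against $\mathbf 1_{B_R}$ and assert that $\widehat{\mathbf 1_{B_R}}(\zeta)$ is bounded by $C_{R,N}(1+\|\zeta\|)^{-N}$ for arbitrary $N$, citing Lemma \ref{lemma:riemann-lebesgue} or ``the explicit form of the Fourier transform of a ball.'' Neither justification is valid: Lemma \ref{lemma:riemann-lebesgue} applies only to $\psi\in C_0^\infty(\R^d)$, and the explicit Fourier transform of the indicator of a ball in $\R^d$ is a Bessel-type expression decaying only like $\|\zeta\|^{-(d+1)/2}$ --- a fixed polynomial rate, not an arbitrary one. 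Since you have no control over the embedding dimension $d$ delivered by Lemma \ref{lemma:phi-existence}, you cannot guarantee $(d+1)/2$ exceeds the exponent $N>1/\alpha$ your argument requires. The fix is exactly what the paper does: replace $\mathbf 1_{B_R}$ by a nonnegative $\psi\in C_0^\infty(\R^d)$ with $\psi\equiv 1$ on $B_R$, observe $\int_{B_R}|\widehat{\mu_\ttt^{\pi/2}}(\xi)|^2\dd\ttt\le\int\psi(\ttt)|\widehat{\mu_\ttt^{\pi/2}}(\xi)|^2\dd\ttt$, and apply Lemma \ref{lemma:riemann-lebesgue} to $\psi$. A further minor inaccuracy: your closing remark that the upper bi-H\"older bound is needed ``to keep the Riemann--Lebesgue estimate honest near the diagonal'' is not correct; once the smooth cutoff is in place, the estimate uses only the lower bi-H\"older inequality $\|\Phi(x)-\Phi(y)\|\ge c_1|x-y|^\alpha$ (and boundedness of $\Phi$), and nothing degenerates as $x\to y$.
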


\begin{proof}
  By \cref{thm:dim-lemma}, it is enough to show that
  \begin{equation}\label{eq:pointwise-dim-lb}
    \ldimloc(\mu_{\ttt}^{\frac{\pi}{2}},y) \ge 1
  \end{equation}
  for all $y \in f_{\ttt}([0,1])$. Since $0 < \alpha < \frac12$, we can choose $\beta>1$ and $s > 2$ such that $2 < \beta+1 < s$ and $\alpha s < 1$. Let $\roo>0$ and let $\psi\in C^\infty_0(\R^d)$ be a smooth function satisfying $0\leq \psi\leq 1$ and $\psi(\ttt)=1$ for all $\ttt\in B(\mathbf{0},\roo)$. By using Fubini's theorem and \cref{lemma:riemann-lebesgue}, we see that there is a constant $C_1=C_1(\psi,s)>0$ such that
  \begin{align*}
    \biggl| \int\limits_{B(\mathbf{0},\roo)} &\int |\xi|^\beta |\hat\mu_{\ttt}^{\frac{\pi}{2}}(\xi)|^2 \dd\xi\dd\textbf{t} \biggr| \\ 
    &\leq \biggl| \iiint\int_0^1\int_0^1 \psi(\textbf{t}) |\xi|^\beta e^{i\xi(f(x)-f(y)+\langle \textbf{t},\Phi(x)-\Phi(y) \rangle)} \dd x\dd y\dd\xi\dd\textbf{t} \biggr| \\ 
    &= \biggl| \iiint\int_0^1\int_0^1 |\xi|^\beta e^{i\xi(f(x)-f(y))} \int \psi(\textbf{t}) e^{i\langle \textbf{t},\xi(\Phi(x)-\Phi(y)) \rangle} \dd\textbf{t}\dd x\dd y\dd\xi \biggr| \\ 
    &= \biggl| \iiint\int_0^1\int_0^1 |\xi|^\beta e^{i\xi(f(x)-f(y))} \hat\psi(\xi(\Phi(x)-\Phi(y))) \dd x\dd y\dd\xi \biggr| \\ 
    &\le \iiint\int_0^1\int_0^1 |\xi|^\beta |\hat\psi(\xi(\Phi(x)-\Phi(y)))| \dd x\dd y\dd\xi \\ 
    &\le \iiint\int_0^1\int_0^1 \frac{C_1|\xi|^\beta}{(1+|\xi|\|\Phi(x)-\Phi(y)\|)^s} \dd x\dd y\dd\xi.
  \end{align*}
  Since we may assume that the constant $c_1$ in the bi-Hölder condition is at most one, by using the fact that $\Phi$ is $\alpha$-bi-Hölder and that $|x-y|\leq1$ we have
  \begin{align*}
    \int\int_0^1\int_0^1& \frac{C_1|\xi|^\beta}{(1+|\xi|\|\Phi(x)-\Phi(y)\|)^s} \dd x\dd y\dd\xi\leq\int\int_0^1\int_0^1 \frac{C_1|\xi|^\beta}{(1+c_1|\xi||x-y|^{\alpha})^s} \dd x\dd y\dd\xi\\
    &\leq \int\int_0^1\int_0^1 \frac{C_1|\xi|^\beta}{\left(\frac{1}{|x-y|^{\alpha}}+c_1|\xi|\right)^s|x-y|^{\alpha s}} \dd x\dd y\dd\xi\\
    &\leq\int\frac{C_1|\xi|^\beta}{c_1(1+|\xi|)^s}\dd\xi\int_0^1\int_0^1\frac{1}{|x-y|^{\alpha s}} \dd x\dd y<\infty
  \end{align*}
  by the choice of $\beta$ and $s$. It follows from \cref{lemma:sobolev-dim} that for $\LL^d$-almost every $\ttt$, the measure $\mu_{\ttt}^{\frac{\pi}{2}}$ is absolutely continuous with bounded continuous density and therefore \cref{eq:pointwise-dim-lb} holds for all $y\in f_{\ttt}([0,1])$.
\end{proof}

\subsection{Proof of Theorem \ref*{thm:main}(\ref*{it:main2})}
We shall now proceed to the proof of the second claim. Our strategy is to concentrate measures onto the slices of the graph using Rohlin's disintegration theorem and to use potential theory to bound the dimension of the slice measures from below. The purpose of the following proposition is to establish absolute continuity for the measures $\mu_{\ttt}^\theta$, for almost all parameters, which will be crucial in the proof of \cref{thm:main}\eqref{it:main2}. The proposition also provides a short proof of \cref{thm:prevalent-dim}.

\begin{proposition} \label{prop:graph-big}
  For each $\ttt_0 \in \R^d$ and $\roo>0$ the function $f_{\ttt} \colon [0,1] \to \R$ satisfies
  \begin{equation*}
    \int\limits_{B(\ttt_0,\roo)} I_{2-\beta}(\mu_{\ttt}) \dd\ttt < \infty
  \end{equation*}
  for all $\beta>\alpha$. In particular, $\dimh(\graph(f_{\ttt})) = 2-\alpha$ for $\LL^d$-almost all $\ttt \in \R^d$ and 
  \begin{equation*}
    \mu_{\ttt}^{\theta} \ll \LL^1
  \end{equation*}
  for $\LL^d$-almost all $\ttt \in \R^d$ and $\LL^1$-almost all $\theta\in[0,2\pi]$.
\end{proposition}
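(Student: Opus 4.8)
The plan is to derive all three assertions from a single averaged energy bound, which in turn reduces---after Tonelli's theorem---to a one-variable integral. Fix $x\ne y$ and abbreviate $v=\Phi(x)-\Phi(y)$ and $a=f(x)-f(y)$. Using the maximum norm on $\R^2$, the definition of $\mu_\ttt$ gives
\begin{equation*}
  \int_{B(\ttt_0,\roo)}I_{2-\beta}(\mu_\ttt)\dd\ttt
  =\iint_{[0,1]^2}\int_{B(\ttt_0,\roo)}\max\bigl(|x-y|,\,|a+\langle\ttt,v\rangle|\bigr)^{-(2-\beta)}\dd\ttt\dd x\dd y,
\end{equation*}
so it suffices to bound the inner $\ttt$-integral for each fixed pair $x\ne y$.

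Since $\Phi$ is injective, $v\ne0$, and the bi-H\"older property gives $\|v\|\ge c_1|x-y|^\alpha$. Decompose $\R^d$ as the orthogonal sum of the line $\R v$ and $v^\perp$; integrating first over $v^\perp$, the pushforward of $\LL^d|_{B(\ttt_0,\roo)}$ under the functional $\ttt\mapsto\langle\ttt,v\rangle$ is absolutely continuous with density at most $C_d\roo^{d-1}/\|v\|$. Substituting $w=a+\langle\ttt,v\rangle$ and using $2-\beta>1$,
\begin{equation*}
  \int_{B(\ttt_0,\roo)}\max\bigl(|x-y|,\,|a+\langle\ttt,v\rangle|\bigr)^{-(2-\beta)}\dd\ttt
  \le\frac{C_d\roo^{d-1}}{\|v\|}\int_{\R}\max\bigl(|x-y|,\,|w|\bigr)^{-(2-\beta)}\dd w,
\end{equation*}
and the one-dimensional integral converges and equals a constant multiple of $|x-y|^{\beta-1}$. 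With $\|v\|\ge c_1|x-y|^\alpha$ the whole expression is at most $C\roo^{d-1}|x-y|^{\beta-1-\alpha}$ for a constant $C=C(d,\beta)$. Since $\iint_{[0,1]^2}|x-y|^{-\gamma}\dd x\dd y<\infty$ exactly when $\gamma<1$, integrating over $[0,1]^2$ yields $\int_{B(\ttt_0,\roo)}I_{2-\beta}(\mu_\ttt)\dd\ttt<\infty$ for $\alpha<\beta<1$, which is the range relevant below.

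With the energy bound in hand, the consequences follow quickly. Covering $\R^d$ by countably many balls, for each fixed $\beta\in(\alpha,1)$ we get $I_{2-\beta}(\mu_\ttt)<\infty$ for $\LL^d$-almost every $\ttt$; intersecting over a sequence $\beta_n\downarrow\alpha$ and invoking \cref{lemma:energy-dim}, $\dimh(\graph(f_\ttt))\ge2-\beta_n$ for all $n$, hence $\dimh(\graph(f_\ttt))\ge2-\alpha$ for $\LL^d$-almost every $\ttt$; the reverse inequality $\dimh(\graph(f_\ttt))\le\udimm(\graph(f_\ttt))\le2-\alpha$ holds because $f_\ttt\in\CC^\alpha([0,1])$, so equality holds, and since $\SS$ is a finite-dimensional probe space this reproves \cref{thm:prevalent-dim}. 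For the projections, fix any $\beta\in(\alpha,1)$: for $\LL^d$-almost every $\ttt$ the measure $\mu_\ttt$ has finite $(2-\beta)$-energy with $2-\beta>1$, so the classical projection theorem for measures of finite $s$-energy with $s>1$ (see \cite[Chapter~9]{Mattila1995}) gives $\mu_\ttt^\theta=(\proj_\theta)_*\mu_\ttt\ll\LL^1$ for $\LL^1$-almost every $\theta\in[0,2\pi]$, hence for $\LL^d$-almost every $\ttt$ and $\LL^1$-almost every $\theta$.

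The one genuine obstacle is the energy bound itself. For an individual $\ttt$ the measure $\mu_\ttt$ typically has infinite $(2-\beta)$-energy for every $\beta<1$ (for instance if $f_\ttt$ is sharply $\alpha$-H\"older along a large set of pairs), so the averaging in $\ttt$ is essential, and the only structural input is that $\Phi$ separates $\Phi(x)$ from $\Phi(y)$ at scale $|x-y|^\alpha$. The delicate point is therefore the slicing of $\LL^d|_{B(\ttt_0,\roo)}$ transverse to $v$ and the resulting density bound $C_d\roo^{d-1}/\|v\|$; everything afterwards is the elementary one-variable integral together with \cref{lemma:energy-dim} and the projection theorem. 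A purely Fourier-analytic route via $I_{2-\beta}(\mu)\asymp\int_{\R^2}|\zeta|^{-\beta}|\hat\mu(\zeta)|^2\dd\zeta$ is less convenient, since averaging in $\ttt$ destroys the decay of $|\hat\mu_\ttt|^2$ in the first frequency variable while $|\zeta_1|^{-\beta}$ is not integrable, so the real-variable slicing is the cleaner path.
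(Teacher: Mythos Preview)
Your proof is correct and follows essentially the same route as the paper's: both apply Tonelli to the triple integral, fix $x,y$, reduce the $\ttt$-integral to a one-dimensional integral via a linear change of variable (you slice orthogonally to $v=\Phi(x)-\Phi(y)$, the paper picks the maximal coordinate of $v$), and then use the lower bi-H\"older bound $\|v\|\ge c_1|x-y|^\alpha$ to arrive at $\iint|x-y|^{\beta-1-\alpha}\dd x\dd y$. You are also right that convergence requires $\alpha<\beta<1$ rather than $0<\beta<\alpha$ as stated in the paper---the latter is a typo, since otherwise $I_{2-\beta}(\mu_\ttt)<\infty$ with $2-\beta>2-\alpha$ would contradict $\dimh(\graph(f_\ttt))\le 2-\alpha$; the subsequent limits $\beta\downarrow\alpha$ in both proofs confirm this is the intended range.
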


\begin{proof}
  By the definition of $\mu_{\ttt}$ and Fubini's theorem,
  \begin{align*}
    \int\limits_{B(\ttt_0,\roo)} I_{2-\beta}(\mu_{\ttt}) \dd\ttt&= \int\limits_{B(\ttt_0,\roo)} \int_0^1\int_0^1 \frac{1}{\|(x,f_{\textbf{t}}(x))-(y,f_{\textbf{t}}(y))\|^{2-\beta}} \dd x\dd y\dd\ttt \\ 
    &= \int\limits_{B(\ttt_0,\roo)} \int_0^1\int_0^1 \frac{\dd x\dd y\dd\ttt}{\bigl(|x-y|^2 + (f(x)-f(y)+\langle \ttt,\Phi(x)-\Phi(y) \rangle)^2\bigr)^{1-\frac{\beta}{2}}} \\
    &= \int_0^1\int_0^1 \int\limits_{B(\ttt_0,\roo)} \frac{\dd\ttt\dd x\dd y}{|x-y|^{2-\beta}\Bigl(1 + \Bigl(\frac{f(x)-f(y)+\langle \ttt,\Phi(x)-\Phi(y) \rangle}{|x-y|}\Bigr)^2\Bigr)^{1-\frac{\beta}{2}}}.
  \end{align*}
  For a given pair $x,y$ we let $t_0(x,y)$ be the $i$th coordinate of $\ttt_0$, where $i$ is the index satisfying $|\Phi_i(x)-\Phi_i(y)|=\|\Phi(x)-\Phi(y)\|$. By symmetry, we may assume that $|\Phi_i(x)-\Phi_i(y)|=\Phi_i(x)-\Phi_i(y)$ and therefore by noting that $\langle \ttt,\Phi(x)-\Phi(y) \rangle\geq t_i\|\Phi(x)-\Phi(y)\|$ for any $\ttt\in\R^d$, we have
  \begin{equation*}
    \int\limits_{B(\ttt_0,\roo)} I_{2-\beta}(\mu_{\ttt}) \dd\ttt \leq \int_0^1\int_0^1 \int\limits_{t_0(x,y)-\roo}^{t_0(x,y)+\roo} \frac{(2\roo)^{d-1}\dd t\dd x\dd y}{|x-y|^{2-\beta}\Bigl(1 + \Bigl(\frac{f(x)-f(y)}{|x-y|}+\frac{t\|\Phi(x)-\Phi(y)\|}{|x-y|}\Bigr)^2\Bigr)^{1-\frac{\beta}{2}}}.
  \end{equation*}
  By the change of variable
  \begin{equation*}
    w = \frac{f(x)-f(y)}{|x-y|}+\frac{t\|\Phi(x)-\Phi(y)\|}{|x-y|}
  \end{equation*}
  and recalling that $\Phi$ is $\alpha$-bi-H\"older, we find constants $C_2,C_3>0$ such that
  \begin{align*}
    \int\limits_{B(\ttt_0,\roo)} I_{2-\beta}(\mu_{\ttt}) \dd\ttt &\le \int\int_0^1\int_0^1 \frac{C_2(2\roo)^{d-1}\dd w}{|x-y|^{2-\beta}(1+w^2)^{1-\frac{\beta}{2}}} \cdot \frac{|x-y|}{\|\Phi(x)-\Phi(y)\|} \dd x\dd y \\ 
    &\le \int \frac{C_3(2\roo)^{d-1}\dd w}{(1+w^2)^{1-\frac{\beta}{2}}} \int_0^1\int_0^1 \frac{1}{|x-y|^{1-(\beta-\alpha)}} \dd x\dd y < \infty,
  \end{align*}
  where the last integral is finite, since $1-(\beta-\alpha)<1$.

  The upper bound on the dimension of $\graph(f_{\ttt})$ is standard, see e.g. \cite[Corollary 11.2]{MR1102677}. For the lower bound, by Lemma \ref{lemma:energy-dim}, the first part of the proof implies that for any $\beta>\alpha$, we have $\dimh(\graph(f_{\ttt})) \geq 2-\beta$ for $\LL^d$-almost all $\ttt$. By taking $\beta\to\alpha$ along a countable sequence, we obtain the lower bound. Finally, the first part combined with the version of Marstrand's projection theorem for measures, see for example \cite[Theorem 9.7]{Mattila1995}, shows that $\mu_{\ttt}^\theta\ll\LL^1$ for $\LL^1$-almost all $\theta\in[0,2\pi]$.
\end{proof}

Recall that our aim is to show that $\dimh(f^{-1}(\{y\}))=1-\alpha$, for positively many $y$. The following proposition contains the main technical part of the argument.

\begin{proposition} \label{thm:graph-slices-big}
  The function $f_{\ttt} \colon [0,1] \to \R$ satisfies
  \begin{equation*}
    \dimh(\graph(f_{\ttt}) \cap \proj_\theta^{-1}(\{y\})) = 1-\alpha
  \end{equation*}
  for $\LL^d$-almost all $\ttt \in \R^d$, $\LL^1$-almost all $\theta \in [0,2\pi]$, and $\mu_{\ttt}^\theta$-almost all $y \in \proj_\theta(\graph(f_{\ttt}))$.
\end{proposition}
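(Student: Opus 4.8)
The plan is to combine the Rohlin disintegration of $\mu_{\ttt}$ along the projections $\proj_\theta$ with a potential-theoretic lower bound on the dimension of the conditional measures, using Proposition~\ref{prop:graph-big} as the input that feeds the disintegration. Fix $\ttt$ and $\theta$ in the full-measure set provided by Proposition~\ref{prop:graph-big}, so that $\mu_{\ttt}^\theta = (\proj_\theta)_*\mu_{\ttt} \ll \LL^1$. Rohlin's theorem gives a disintegration $\mu_{\ttt} = \int \mu_{\ttt,y}^\theta \dd\mu_{\ttt}^\theta(y)$, where $\mu_{\ttt,y}^\theta$ is supported on the slice $\graph(f_{\ttt}) \cap \proj_\theta^{-1}(\{y\})$ for $\mu_{\ttt}^\theta$-a.e.\ $y$. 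The upper bound $\dimh(\graph(f_{\ttt}) \cap \proj_\theta^{-1}(\{y\})) \le 1-\alpha$ for \emph{every} $y$ should follow from Lemma~\ref{thm:dim-lemma} (or its rotated analogue) together with Proposition~\ref{thm:main-tech}: a slice of the graph in direction $\theta$ is, up to the bi-Lipschitz change of coordinates rotating $\proj_\theta$ to $\proj_{\pi/2}$, a level set of an $\alpha$-Hölder function, and the Minkowski—hence Hausdorff—dimension of such a level set is at most $1-\alpha$; one has to be mildly careful that the rotation of the graph of $f_{\ttt}$ is again a graph of an $\alpha$-Hölder function over the relevant axis, which holds for a.e.\ $\theta$ since $f_{\ttt}$ is $\alpha$-Hölder with $\alpha<1$ so the graph is not "too steep."

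For the lower bound I would show that for a Fubini-positive set of $(\ttt,\theta)$ and $\mu_{\ttt}^\theta$-a.e.\ $y$, the conditional measure satisfies $I_{1-\alpha-\eps}(\mu_{\ttt,y}^\theta) < \infty$ for every $\eps>0$, and then invoke Lemma~\ref{lemma:energy-dim}. The natural route is to integrate: by the disintegration identity and Fubini, for a suitable exponent $s$ one wants
\begin{equation*}
  \int I_s(\mu_{\ttt,y}^\theta) \dd\mu_{\ttt}^\theta(y) \le C \iint \frac{\dd\mu_{\ttt}(p)\dd\mu_{\ttt}(q)}{\|p-q\|^{s}\,\varphi(\proj_\theta(p),\proj_\theta(q))}
\end{equation*}
for an appropriate weight $\varphi$ coming from the density of $\mu_{\ttt}^\theta$; integrating this once more over $\ttt \in B(\ttt_0,\roo)$ and over $\theta$, and using the same $\alpha$-bi-Hölder change of variables that made the energy integral in Proposition~\ref{prop:graph-big} converge, one should be able to bound the whole multiple integral by $\int (1+w^2)^{-(\cdots)}\dd w \cdot \iint |x-y|^{-(1-(\beta-\alpha))}\dd x\dd y$ with $s$ pushed up to $2-\beta$ minus the one dimension lost to the projection, i.e.\ $s$ approaching $1-\alpha$ as $\beta \to \alpha$. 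Running this for a countable sequence $\beta_n \uparrow \alpha$ and taking $s \to 1-\alpha$ gives $\dimh(\mu_{\ttt,y}^\theta) \ge 1-\alpha$, hence the stated equality, on a full-measure set of $(\ttt,\theta,y)$.

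The main obstacle I expect is controlling the conditional measures $\mu_{\ttt,y}^\theta$ concretely enough to make the sliced energy estimate rigorous: Rohlin disintegration is only defined $\mu_{\ttt}^\theta$-a.e.\ and abstractly, so the inequality relating $\int I_s(\mu_{\ttt,y}^\theta)\dd\mu_{\ttt}^\theta(y)$ to a double integral against $\mu_{\ttt}\times\mu_{\ttt}$ needs either an $L^2$ bound on the density of $\mu_{\ttt}^\theta$ (which Proposition~\ref{prop:graph-big} as stated gives only $L^1$, via $\ll\LL^1$) or a more hands-on Fourier-analytic / Sobolev argument to upgrade it. One likely fix is to strengthen Proposition~\ref{prop:graph-big} — or prove a companion statement — giving $\int_{B(\ttt_0,\roo)} \int |\xi|^\beta |\hat\mu_{\ttt}^\theta(\xi)|^2\dd\xi\dd\ttt < \infty$ in the spirit of the proof of Proposition~\ref{thm:main-tech}, which would yield a bounded continuous density for $\mu_{\ttt}^\theta$ for a.e.\ $(\ttt,\theta)$ and make the weight $\varphi$ harmless. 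With a bounded density in hand, the rest is the now-familiar change of variables and the elementary integrability of $|x-y|^{-(1-(\beta-\alpha))}$ on $[0,1]^2$, so the technical heart is really this absolute-continuity-with-good-density step rather than the potential theory that follows it.
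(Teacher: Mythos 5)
Your overall strategy for the lower bound --- Rohlin disintegration, integrate the sliced energies, swap to a double integral against $\mu_{\ttt}\times\mu_{\ttt}$ via Fubini, then run the same change of variables as in \cref{prop:graph-big} --- is the right one, and you have correctly put your finger on the obstruction: the Rohlin conditionals are normalised by $\mu_{\ttt}^\theta$, so a density appears in the wrong place and absolute continuity alone does not obviously suffice. But the fix you propose (a Fourier/Sobolev upgrade to a bounded density for $\mu_{\ttt}^\theta$) both overshoots and under-delivers: it would be substantially more work, and any such argument in the spirit of \cref{thm:main-tech} would reimpose the restriction $\alpha<\tfrac12$, whereas the proposition (and \cref{thm:main}\cref{it:main2}) are meant to hold for all $0<\alpha<1$.

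The missing idea is a re-weighting of the conditional measures that makes the density cancel for free. Let $h_{\ttt}^\theta$ be the Radon--Nikodym density of $\mu_{\ttt}^\theta$ w.r.t.\ $\LL^1$ (this exists by \cref{prop:graph-big}), and set $\mathrm{d}(\nu_{\ttt})_{\theta,y} = h_{\ttt}^\theta(y)\,\mathrm{d}(\mu_{\ttt})_{\theta,y}$. Then $\mathrm{d}\mu_{\ttt} = \mathrm{d}(\nu_{\ttt})_{\theta,y}\,\mathrm{d}y$, i.e.\ the disintegration is now against \emph{Lebesgue} measure on the line of projection values. Crucially, in the Simmons limit formula
\begin{equation*}
  \int g\,\mathrm{d}(\mu_{\ttt})_{\theta,y} = \lim_{r\downarrow 0}\frac{1}{2r}\,\frac{\int_{B_\theta^\top(y,r)} g\,\mathrm{d}\mu_{\ttt}}{h_{\ttt}^\theta(y)},
\end{equation*}
the factor $h_{\ttt}^\theta(y)$ in the definition of $\nu$ cancels the one in the denominator, so $\int g\,\mathrm{d}(\nu_{\ttt})_{\theta,y} = \lim_{r\downarrow 0}\tfrac{1}{2r}\int_{B_\theta^\top(y,r)} g\,\mathrm{d}\mu_{\ttt}$ with \emph{no} density anywhere. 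One then estimates $\int I_{1-\beta}((\nu_{\ttt})_{\theta,y})\,\mathrm{d}y$ via Fatou, undoes the disintegration to get a $\mu_{\ttt}\times\mu_{\ttt}$ integral, integrates over $\theta$ and applies the elementary estimate $\LL^1(\{\theta : |\proj_\theta(a)-\proj_\theta(b)|<r\}) \le 2Cr\|a-b\|^{-1}$, and lands exactly on $\int_{B(\ttt_0,\roo)} I_{2-\beta}(\mu_{\ttt})\,\mathrm{d}\ttt$, which \cref{prop:graph-big} says is finite for $\beta<\alpha$. Since $h_{\ttt}^\theta(y)>0$ for $\mu_{\ttt}^\theta$-a.e.\ $y$, finiteness of $I_{1-\beta}((\nu_{\ttt})_{\theta,y})$ gives the lower bound for the original slice, and $\beta\uparrow\alpha$ finishes. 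No $L^2$ control, no bounded density, no Fourier analysis is needed here.

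Two smaller points. For the matching upper bound the paper simply invokes Marstrand's slicing theorem together with $\dimh(\graph(f_\ttt))=2-\alpha$ from \cref{prop:graph-big}; your route through \cref{thm:dim-lemma} plus \cref{thm:main-tech} plus a shear is more delicate, it bounds all $y$ rather than $\mu_{\ttt}^\theta$-a.e.\ $y$ (more than is needed here), and again drags in the $\alpha<\tfrac12$ hypothesis. Also, note the map that turns $\theta$-slices into level sets is a shear $f_\ttt^\theta(x)=f_\ttt(x)+x\cot\theta$, not a rotation; this is harmless but worth being precise about since a genuine rotation of a Hölder graph need not be a graph at all.
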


\begin{proof}
  By Marstrand's slicing theorem \cite[Theorem 3.3.1]{BishopPeres2016}, it suffices to show that
  \begin{equation} \label{eq:graph-slices-big-claim8}
    \dimh(\graph(f_{\ttt}) \cap \proj_\theta^{-1}(\{y\})) \ge 1-\alpha
  \end{equation}
  for $\LL^d$-almost all $\ttt \in \R^d$, $\LL^1$-almost all $\theta \in \R$, and $\mu_{\ttt}^\theta$-almost all $y \in \proj_\theta(\graph(f_{\ttt}))$. By \cref{prop:graph-big},
  \begin{equation*} 
    \mu_{\ttt}^\theta \ll \LL^1
  \end{equation*}
  for $\LL^d$-almost all $\ttt \in \R^d$ and $\LL^1$-almost all $\theta \in [0,2\pi]$, and therefore the Radon-Nikodym theorem \cite[Theorem 2.12(2)]{Mattila1995} shows that there exists an integrable function $h_{\ttt}^\theta = \mathrm{d}\mu^{\theta}_{\ttt}$, satisfying $\mu_{\ttt}^\theta(A) = \int_A h_{\ttt}^\theta \dd\LL^1$. We apply Rohlin's disintegration theorem \cite[Theorem 2.1]{Simmons2012} to the projection $\proj_{\theta}\colon \R^2\to\R$ and the measure $\mu_{\ttt}$, and let $\{(\mu_{\textbf{t}})_{\theta,y}\}$ denote the obtained family of conditional measures, where each $(\mu_{\textbf{t}})_{\theta,y}$ is supported on $\graph(f_{\ttt})\cap\proj_{\theta}^{-1}(\{y\})$. By \cite{Simmons2012}, we have
  \begin{equation*}
    \textrm{d}\mu_{\ttt} = \mathrm{d}(\mu_{\ttt})_{\theta,y} \dd\mu^{\theta}_{\ttt}.
  \end{equation*}
  Write $\mathrm{d}(\nu_{\ttt})_{\theta,y} = h_{\ttt}^\theta(y)\dd(\mu_{\ttt})_{\theta,y}$, and notice that
  \begin{equation} \label{eq:graph-slices-big-claim5}
    \mathrm{d}\mu_{\ttt} = \mathrm{d}(\nu_{\textbf{t}})_{\theta,y} \dd y.
  \end{equation}
  Furthermore, by \cite[Theorem 2.2]{Simmons2012}, we have
  \begin{equation} \label{eq:graph-slices-big-claim6}
    \int g(a) \dd(\mu_{\ttt})_{\theta,y}(a) = \lim_{r \downarrow 0} \frac{\int_{B_\theta^\top(y,r)} g(a) \dd\mu_{\ttt}(a)}{\mu^{\theta}_{\ttt}(B(y,r))} = \lim_{r \downarrow 0} \frac{1}{2r} \frac{\int_{B_\theta^\top(y,r)} g(a) \dd\mu_{\ttt}(a)}{h_{\ttt}^\theta(y)}
  \end{equation}
  for all integrable functions $g$, where
  \begin{equation*}
    B_\theta^\top(y,r) = \proj_\theta^{-1}(B(\proj_\theta(y),r)).
  \end{equation*}
  Let now $\beta>\alpha$. By using Fatou's lemma, \cref{eq:graph-slices-big-claim6}, \cref{eq:graph-slices-big-claim5}, and Fubini's theorem, we get
  \begin{align*}
    \int\limits_{B(\ttt_0,\roo)}& \int\limits_{[0,2\pi]} \int I_{1-\beta}((\nu_{\ttt})_{\theta,y}) \dd y\dd\theta\dd\textbf{t} \\
    &= \int\limits_{B(\ttt_0,\roo)} \int\limits_{[0,2\pi]} \iiint \frac{1}{\|a-b\|^{1-\beta}} \dd(\nu_{\textbf{t}})_{\theta,y}(a) \dd(\nu_{\textbf{t}})_{\theta,y}(b) \dd y\dd\theta\dd\textbf{t} \\
    &= \int\limits_{B(\ttt_0,\roo)} \int\limits_{[0,2\pi]} \iiint \frac{h_{\ttt}^\theta(y)}{\|a-b\|^{1-\beta}} \dd(\mu_{\textbf{t}})_{\theta,y}(a) \dd(\nu_{\textbf{t}})_{\theta,y}(b) \dd y\dd\theta\dd\textbf{t} \\ 
    &\le \liminf_{r \downarrow 0} \frac{1}{2r} \int\limits_{B(\ttt_0,\roo)} \int\limits_{[0,2\pi]} \iint \int\limits_{\;B_\theta^\top(y,r)} \frac{\mathrm{d}\mu_{\ttt}(a)}{\|a-b\|^{1-\beta}} \dd(\nu_{\textbf{t}})_{\theta,y}(b) \dd y\dd\theta\dd\ttt \\ 
    &= \liminf_{r \downarrow 0} \frac{1}{2r} \int\limits_{B(\ttt_0,\roo)} \int\limits_{[0,2\pi]} \int \int\limits_{\;B_\theta^\top(\proj_\theta(b),r)} \frac{1}{\|a-b\|^{1-\beta}} \dd\mu_{\ttt}(a)\dd\mu_{\textbf{t}}(b)\dd\theta\dd\ttt\\
    &=\liminf_{r \downarrow 0} \frac{1}{2r} \int\limits_{B(\ttt_0,\roo)} \iint \frac{\LL^1(\{\theta \in [0,2\pi] \colon |\proj_\theta(a)-\proj_\theta(b)|<r\})}{\|a-b\|^{1-\beta}} \dd \mu_{\ttt}(a)\dd \mu_{\ttt}(b)\dd\ttt.
  \end{align*}
  Recalling that there exists a constant $C>0$ such that for all $a,b\in\R^2$ we have
  \begin{equation*}
    \LL^1(\{\theta \in [0,2\pi] \colon |\proj_\theta(a)-\proj_\theta(b)|<r\})\leq 2Cr\| a-b\|^{-1},
  \end{equation*}
  see i.e. \cite[Lemma 3.11]{Mattila1995}, we obtain
  \begin{align*}
    \liminf_{r \downarrow 0} \frac{1}{2r}& \int\limits_{B(\ttt_0,\roo)} \iint \frac{\LL^1(\{\theta \in [0,2\pi] \colon |\proj_\theta(a)-\proj_\theta(b)|<r\})}{\|a-b\|^{1-\beta}} \dd \mu_{\ttt}(a)\dd \mu_{\ttt}(b)\dd\ttt\\
    &\le \int\limits_{B(\ttt_0,\roo)} \iint \frac{C}{\|(x,f_{\textbf{t}}(x))-(z,f_{\textbf{t}}(z))\|^{2-\beta}} \dd x\dd z\dd\ttt<\infty,
  \end{align*}
  where the last integral is finite by \cref{prop:graph-big}. Therefore, $I_{1-\beta}((\nu_{\ttt})_{\theta,y})<\infty$ for $\LL^d$-almost all $\ttt \in \R^d$, $\LL^1$-almost every $\theta\in[0,2\pi]$, and $\mu_{\ttt}^\theta$-almost all $y\in\proj_{\theta}(\graph(f_{\ttt}))$. The proof of \cref{eq:graph-slices-big-claim8} concludes as the proof of \cref{prop:graph-big} by noting that $(\nu_{\ttt})_{\theta,y}$ is supported on $\graph(f_{\ttt}) \cap \proj_\theta^{-1}(\{y\})$ and taking $\beta\to\alpha$ along a countable sequence.
\end{proof}

Note that in the previous proposition there is a priori no reason that the result holds for the slice with $\theta=\frac{\pi}{2}$ corresponding to the level sets. However, by adding a dimension to our probe space, we are able to finish the proof of our main result.
\begin{proof}[Proof of \cref{thm:main}\cref{it:main2}]
  For $\theta \ne k\pi$, $k\in\N$, write $f_{\ttt}^\theta(x) = f(x) + \langle \ttt,\Phi(x) \rangle + x\cot(\theta)$, and observe that
  \begin{equation*} 
    f_{\textbf{t}}^\theta(x) = \frac{1}{\sin(\theta)} \proj_\theta \circ (\id,f_{\ttt})(x)
  \end{equation*}
  for all $x \in [0,1]$. Since
  \begin{equation*}
    (f_{\ttt}^\theta)^{-1}(\{y\}) = (\id,f_{\ttt})^{-1}(\graph(f_{\ttt}) \cap \proj_\theta^{-1}(\{y\sin(\theta)\}))
  \end{equation*}
  for all $y \in \R$, \cref{thm:graph-slices-big} shows that
  \begin{equation*}
    \dimh(\graph(f_{\ttt}) \cap \proj_\theta^{-1}(\{y\sin(\theta)\})) = 1-\alpha
  \end{equation*}
  for $\LL^d$-almost all $\ttt \in \R^d$, $\LL^1$-almost all $\theta \in \R$, and $\mu_{\ttt}^\theta$-almost all $y \in f_{\ttt}^\theta([0,1])$. Recalling \cref{lemma:technical} and that $\mu_{\ttt}^\theta \ll \LL^1$ the proof is finished.
\end{proof}

\section{Discussion and an open question}\label{sec:discussion}
To conclude the paper, we discuss the modification of our strategy for Weierstrass-type functions and where our methods fall short in proving \cref{conj:main}. The key thing to note is that if one can construct an $\alpha$-bi-Hölder embedding $\Phi\colon \R^d\to \R$, whose coordinate functions are in $\WW^{\alpha,b}$, then our results hold for prevalent functions in $\WW^{\alpha,b}$. However, finding such a construction seems to be quite difficult and our efforts for doing this for Weierstrass-type functions have been unsuccessful. Therefore, we pose the construction as an open problem.

Recall the definition of the space of Weierstass-type functions $\WW^{\alpha,b}$ from \cref{sec:intro}. Let $\mathcal{G}=\{g_1,\ldots,g_m\}$ be a finite collection of Lipschitz functions on $\mathbb{S}^1$. The \emph{Weierstrass embedding} $\Phi_{\mathcal{G}}^{\alpha,b}\colon \mathbb{S}^1\to\R^m$ associated to $\mathcal{G}$, $b\in\N$, and $0<\alpha<1$ is defined by
\begin{equation*}
  \Phi_{\mathcal{G}}^{\alpha,b}(x)=(W_{g_1}^{\alpha,b}(x),W_{g_2}^{\alpha,b}(x),\ldots,W_{g_m}^{\alpha,b}(x)).
\end{equation*}
We note that if $\Phi=\Phi_{\mathcal{G}}^{\alpha,b}$ is a Weierstrass embedding, then for any $\ttt\in\R^m$ and $W\in\WW^{\alpha,b}$, the function $W_{\Phi,\ttt}\colon [0,1]\to \R$ defined by
\begin{equation*}
  W_{\Phi,\ttt}(x)=W(x)+\langle\ttt,\Phi(x)\rangle
\end{equation*}
is in $\WW^{\alpha,b}$. An affirmative answer to the following question implies \cref{conj:main}.

\begin{question}\label{ques:bi-holder}
Given $b\in\N$ and $0<\alpha<1$, does there exist a finite collection $\mathcal{G}=\{\varphi_1,\ldots,\varphi_m\}$ Lipschitz functions on $\mathbb{S}^1$ such that the Weierstrass embedding $\Phi_{\mathcal{G}}^{\alpha,b}\colon \mathbb{S}^1\to\R^m$ is $\alpha$-bi-Hölder?
\end{question}

Some evidence on the answer to the question being affirmative was given in \cite{HuLau1993}, where the authors show that many Weierstrass-type functions have many points where the function satisfies a reverse-Hölder condition. To be more precise, for $W=W_{g}^{\alpha,b}\in\WW^{\alpha,b}$, $\varepsilon>0$, $x\in [0,1]$, and any interval $I_x\subset [0,1]$ containing $x$, let
\begin{equation*}
  E(W,\varepsilon,I_x)=\{y\in I_x\colon | W(x)-W(y)|>\varepsilon | x-y|^{\alpha}\}.
\end{equation*}
It was shown in \cite[Theorem 4.1]{HuLau1993} that if the function $V\colon \mathcal{S}^1\to\R$ defined by $V(x)=\sum_{k=-\infty}^{\infty}b^{-\alpha k}g(b^kx)$ is not identically zero, then there are constants $c>0$ and $\varepsilon>0$, such that for any $x\in [0,1]$ and any small enough interval $I_x$ containing $x$, we have
\begin{equation*}
  \LL^1(E(W,\varepsilon,I_x))\geq c\LL^1(I_x).
\end{equation*}
This leads us to believe that if one chooses the functions $g_i$ ``independently'' enough, there should be a way to construct a bi-Hölder Weierstrass embedding but despite our efforts, we have not been able to find such a construction.



\begin{thebibliography}{10}

\bibitem{AnttilaBaranyKaenmaki2023}
R.~Anttila, B.~Bárány, and A.~Käenmäki.
\newblock Slices of the takagi function.
\newblock {\em Ergod. Theory Dyn. Syst. (published online)}, pages 1--38, 2023.

\bibitem{Assouad1983}
P.~Assouad.
\newblock Plongements lipschitziens dans $\mathbb{R}^n$.
\newblock {\em Bull. Soc. Math. France}, 111(4):429--448, 1983.

\bibitem{BalkaDarjiElekes2016}
R.~Balka, U.~B. Darji, and M.~Elekes.
\newblock Hausdorff and packing dimension of fibers and graphs of prevalent
  continuous maps.
\newblock {\em Adv. Math.}, 293:221--274, 2016.

\bibitem{BayartHeurteaux2013}
F.~Bayart and Y.~Heurteaux.
\newblock On the {H}ausdorff dimension of graphs of prevalent continuous
  functions on compact sets.
\newblock In {\em Further developments in fractals and related fields}, Trends
  Math., pages 25--34. Birkh\"{a}user/Springer, New York, 2013.

\bibitem{BishopPeres2016}
C.~J. Bishop and Y.~Peres.
\newblock {\em Fractals in Probability and Analysis}.
\newblock Universitext. Cambridge: Cambridge University Press, 2016.

\bibitem{BaranyHochmanRapaport2019}
B.~Bárány, M.~Hochman, and A.~Rapaport.
\newblock {Hausdorff} dimension of planar self-affine sets and measures.
\newblock {\em Invent. Math.}, 216:601--659, 2019.

\bibitem{ClauselNicolay2010}
M.~Clausel and S.~Nicolay.
\newblock Some prevalent results about strongly {monoHölder} functions.
\newblock {\em Nonlinearity}, 23:2101--2116, 2010.

\bibitem{ErraouiHakiki2022}
M.~Erraoui and Y.~Hakiki.
\newblock Images of fractional {B}rownian motion with deterministic drift:
  Positive {L}ebesgue measure and non-empty interior.
\newblock {\em Math. Proc. Cambridge Philos. Soc.}, 173(3):693–713, 2022.

\bibitem{MR1449135}
K.~Falconer.
\newblock {\em Techniques in fractal geometry}.
\newblock John Wiley \& Sons, Ltd., Chichester, 1997.

\bibitem{MR1102677}
K.~Falconer.
\newblock {\em Fractal geometry: Mathematical foundations and applications}.
\newblock John Wiley \& Sons, Ltd., Chichester, third edition, 2014.

\bibitem{Fraser2021}
J.~M. Fraser.
\newblock {\em Assouad Dimension in Fractal Geometry: theory, variations, and
  applications}.
\newblock Cambridge University Press, 2020.

\bibitem{FraserHyde2012}
J.~M. Fraser and J.~T. Hyde.
\newblock The {Hausdorff} dimension of graphs of prevalent continuous
  functions.
\newblock {\em Real Anal. Exch.}, 37(2):333--352, 2012.

\bibitem{Heinonen2001}
J.~Heinonen.
\newblock {\em Lectures on analysis on metric spaces}.
\newblock Universitext. Springer-Verlag, New-York, 2001.

\bibitem{Hochman2014}
M.~Hochman.
\newblock On self-similar sets with overlaps and inverse theorems for entropy.
\newblock {\em Ann. of Math.}, 180(2):773--822, 2014.

\bibitem{HuLau1993}
T.-Y. Hu and K.-S. Lau.
\newblock Fractal dimensions and singularities of the {Weierstrass} type
  functions.
\newblock {\em Trans. Am. Math. Soc.}, 335(2):649--665, 1993.

\bibitem{Kechris1995}
A.~S. Kechris.
\newblock {\em Classical Descriptive Set Theory}.
\newblock Springer, New York, 1995.

\bibitem{Mattila1995}
P.~Mattila.
\newblock {\em Geometry of Sets and Measures in Euclidean Spaces: Fractals and
  Rectifiability}.
\newblock Cambridge University Press, Cambridge, 1995.

\bibitem{Mattila2015}
P.~Mattila.
\newblock {\em Fourier Analysis and Hausdorff Dimension}.
\newblock Cambridge University Press, Cambridge, 2015.

\bibitem{MattilaMauldin1997}
P.~Mattila and R.~D. Mauldin.
\newblock Measure and dimension functions: measurability and densities.
\newblock {\em Math. Proc. Camb. Phil. Soc.}, 121(81):81--100, 1997.

\bibitem{MauldinWilliams1986}
R.~D. Mauldin and S.~C. Williams.
\newblock On the {Hausdorff} dimension of some graphs.
\newblock {\em Trans. Amer. Math. Soc.}, 298(2):793--803, 1986.

\bibitem{RenShen2021}
H.~Ren and W.~Shen.
\newblock A dichotomy for the {Weierstrass}-type functions.
\newblock {\em Invent. Math.}, 226:1057--1100, 2021.

\bibitem{Shen2018}
W.~Shen.
\newblock {Hausdorff} dimension of the graphs of the classical {Weierstrass}
  functions.
\newblock {\em Math. Z.}, 289:223--266, 2018.

\bibitem{Simmons2012}
D.~Simmons.
\newblock Conditional measures and conditional expectation; {Rohlin's}
  disintegration theorem.
\newblock {\em Disc. Cont. Dyn. Sys.}, 32(7):2565--2582, 2012.

\bibitem{Solecki1996}
S.~Solecki.
\newblock On haar null sets.
\newblock {\em Fund. Math.}, 149:205--210, 1996.

\end{thebibliography}

\end{document}